\documentclass{article}

\usepackage[utf8]{inputenc}
\usepackage[T1]{fontenc}
\usepackage{amsmath}
\usepackage{amsthm}
\usepackage{amssymb}
\usepackage{mathrsfs}
\usepackage{mathtools}
\usepackage{etoolbox}
\usepackage{stmaryrd}
\usepackage{amsfonts}
\usepackage{pgfmath}
\usepackage{modulus}
\usepackage{tikz}
\usepackage{tikz-3dplot}
\usepackage{wrapfig}
\usepackage{subcaption}
\usepackage[export]{adjustbox}
\usepackage{listings}
\usepackage{graphicx}
\usepackage{hyperref}

\newtheorem{introtheorem}{Theorem}

\newtheorem{theorem}{Theorem}[section]
\newtheorem*{theorem*}{Theorem}

\newtheorem{lem}[theorem]{Lemma}
\newtheorem{prop}[theorem]{Proposition}

\newtheorem{defi}[theorem]{Definition}

\theoremstyle{definition}

\theoremstyle{remark}
\newtheorem{rem}[theorem]{Remark}

\newcommand{\intint}[2]{\llbracket #1, #2 \rrbracket}
\newcommand{\N}{\mathbf{N}}
\newcommand{\Z}{\mathbf{Z}}

\newcommand{\defn}{\stackrel{\textrm{\scriptsize def}}{=}}

\renewcommand{\epsilon}{\varepsilon}
\newcommand{\aut}{\text{Aut}}
\newcommand{\ad}{\text{ad}}
\newcommand{\out}{\text{Out}}
\newcommand{\inn}{\text{Inn}}
\newcommand{\inv}[1]{#1^{-1}}
\newcommand{\id}{\text{id}}
\newcommand{\gen}[1]{\left\langle #1 \right\rangle}

\title{The conjugacy problem in $\out(F_m)$ when the polynomial restrictions are non-growing}
\author{Gabriel Bartlett}

\begin{document}

\maketitle

\begin{abstract}
    We prove that the conjugacy problem in $\out(F_m)$ is solvable for the class of outer automorphisms whose restrictions to their polynomial subgroups are of finite order. 
    To do this, we first investigate the structure of suspensions of free groups by automorphisms whose outer class is of finite order.
    We then apply a reduction of our main result to certain problems on groups of this form.
\end{abstract}


\tableofcontents


\section*{Introduction}

    \subsection*{Context}

    In 1911, Dehn introduced three fundamental problems in group theory, including the conjugacy problem.
    A group $G$ has a solvable conjugacy problem if there exists an algorithm which takes two elements of $G$ and responds as to whether or not they are conjugate in $G$.
    An early instance occurs when one classifies conjugacy classes of general linear groups.

    In many classes of groups, explicit solutions to this problem have been found.
    For instance, the problem is solved for free groups simply by considering the corresponding cyclically reduced words, and in general for hyperbolic groups and many groups with hyperbolic geometry.
    In 2006, Bogopolski, Martino, Maslakova, and Ventura solved the problem for free-by-cyclic groups (see \cite{bmmv}).
    
    While the problem is particularly interesting for automorphism groups, it is remarkable that it is not solved for groups of outer automorphisms of free groups.
    However some progress has been made toward a full solution.
    The first observation to make surrounding this particular instance of the conjugacy problem is a strong link with the isomorphism problem among mapping tori.
    Indeed, the mapping torus constructed from an automorphism is isomorphic to another by a fibre- and orientation-preserving isomorphism if and only if the outer automorphisms are conjugate.
    The problem has been solved for finite order outer automorphisms, in \cite{Khram95}.
    In \cite{sela} and in \cite{los}, the problem was solved for irreducible automorphisms.
    It has also been solved for Dehn twists in \cite{Dehntwists},  for automorphisms of linear growth in \cite{klv}, and in \cite{dahm} it was solved for atoroidal automorphisms.
    In \cite{reduc_conj_pb}, Dahmani and Touikan give a reduction of this problem to a number of sub-problems concerning mapping tori.
    The conjugacy problem in $\out(F_m)$ is therefore reduced to the study of groups of the form $F_m \rtimes_{\varphi} \gen{t}$, and their subgroups, for so-called polynomially growing automorphisms $\varphi$.
    In \cite{unip_lin_susp}, Dahmani and Touikan apply this reduction to solve the problem for outer automorphisms whose polynomial restrictions are unipotent and linearly growing.
    In \cite{fh}, Feighn and Handel solve it for unipotent polynomially growing (UPG) automorphisms, using a completely independent strategy.
    Furthermore, in \cite{dfmt}, the authors solve the conjugacy problem for all automorphisms in $\out(F_3)$.

    \subsection*{Main result}

    In this paper, we apply Dahmani and Touikan's reduction (see Theorem \ref{t_reduc}) to a certain class $\mathcal{A}$ of outer automorphisms.
    This class consists of automorphisms whose restrictions to their polynomial subgroups are of finite order.
   
    We wish to prove the following theorem (see Theorem \ref{t_conj_pb}).

    \begin{introtheorem}
    \label{t_A}
        Let $F$ be a finitely generated free group.
        There is an algorithm that decides whether any two outer automorphisms of $F$, whose restrictions to their polynomial subgroups are of finite order, are conjugate in $\out(F)$.
    \end{introtheorem}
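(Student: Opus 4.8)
The plan is to run the class $\mathcal{A}$ through Dahmani--Touikan's reduction. By Theorem~\ref{t_reduc}, the conjugacy problem in $\out(F)$ restricted to a class of outer automorphisms is solvable once a fixed finite list of algorithmic problems is solved for the suspensions $G_\psi = F' \rtimes_\psi \langle t \rangle$ attached to the polynomially growing pieces $\psi$ arising from automorphisms in that class, together with the canonical peripheral structures that record how those pieces are glued to the exponentially growing strata. The first task is bookkeeping: one must check that, for $\phi \in \mathcal{A}$, every polynomially growing piece produced by the reduction is, up to the inner ambiguity, an automorphism of finite order. This is exactly the hypothesis that the restrictions of $\phi$ to its polynomial subgroups are of finite order, carried through the relative train track machinery underlying \cite{reduc_conj_pb}. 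It therefore suffices to solve the reduced problems for suspensions $G_\psi = F' \rtimes_\psi \langle t \rangle$ with $[\psi]$ of finite order in $\out(F')$.

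The heart of the matter is then a structure theorem for such $G_\psi$. Fix $\psi$ of order $d$ in $\out(F')$. Since $\psi^d$ is inner, $F' \rtimes \langle t^d \rangle \cong F' \times \Z$, so $G_\psi$ is virtually $F' \times \Z$; I would prove the sharper statement that $G_\psi$ is the fundamental group of a finite graph of infinite cyclic groups. To see this, realize $[\psi]$ by an order-$d$ simplicial automorphism $\gamma$ of a finite connected graph $\Gamma$ with $\pi_1(\Gamma) \cong F'$, and view the mapping torus of $\gamma$ as a graph of spaces over $\Gamma/\gamma$ in which every vertex and edge space is a circle; the underlying graph and the edge maps are then read off explicitly from the stabiliser sizes of the $\gamma$-action. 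Several consequences follow at once: $G_\psi$ is free-by-cyclic (it is $F' \rtimes_\psi \langle t\rangle$ by construction, equivalently a unimodular GBS group), it carries a canonical $\Z$-splitting and an associated deformation space, its cyclic subgroups and the peripheral subgroups produced by the reduction are computable subgroups of the same kind, and $\aut(G_\psi)$ admits, relative to a finite family of subgroups, an effective description obtained by passing to the finite-index $F' \times \Z$ and tracking the residual $\Z/d$-action.

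With this in hand I would dispatch the reduced problems in turn. The conjugacy problem in $G_\psi$, and in the subgroups of the relevant kind that the reduction calls for, is solvable because these groups are free-by-cyclic, by Bogopolski--Martino--Maslakova--Ventura~\cite{bmmv}. The remaining problems are orbit questions for elements and peripheral subgroups under a controlled part of $\aut(G_\psi)$, together with recognition problems of ``mixed Whitehead'' type; these become finite computations once one descends to the canonical $\Z$-splitting and works inside $F' \times \Z$, where $\aut(F' \times \Z)$ is generated by $\aut(F')$, the evident automorphisms of the central $\Z$, and the shears, and where Whitehead-type algorithms apply --- all while carrying along the $\Z/d$-action and the marked cyclic and peripheral subgroups. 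Assembling these solutions with Theorem~\ref{t_reduc} produces the algorithm asserted in Theorem~\ref{t_A}.

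The main obstacle is not the group theory of $G_\psi$, which is tame, but the interface with the reduction: one must verify that the peripheral structures the reduction attaches to $G_\psi$ are malnormal, algorithmically recognisable, and preserved by the bookkeeping above, and that the automorphisms one is required to recognise are precisely those compatible with this structure; simultaneously one must render effective the claims about $\aut(G_\psi)$ relative to these subgroups, for which the GBS deformation space and Whitehead-type algorithms for $F' \times \Z$ are the tools. A secondary subtlety, which forces the argument to be organised recursively on a complexity, is that the peripheral subgroups may themselves be suspensions of the same type.
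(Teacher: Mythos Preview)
Your overall strategy---feed $\mathcal{A}$ into the Dahmani--Touikan reduction and solve the resulting problems on the finite-order suspensions---is exactly the paper's strategy, and your bookkeeping observation that the polynomial sub-mapping tori are again of the form $F'\rtimes_\psi\langle t\rangle$ with $[\psi]$ of finite order is correct (this is Remark~\ref{r_smt}).

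However, there is a genuine gap. You list the reduced problems as ``conjugacy in $G_\psi$'', ``orbit questions under a controlled part of $\aut(G_\psi)$'', and ``mixed Whitehead--type recognition''. You never address condition~(3) of Theorem~\ref{t_reduc}: that every group in $\mathcal{P}'$ has \emph{congruences effectively separating torsion} (Definition~\ref{d_mink}). This is not an orbit problem and it does not reduce to Whitehead-type questions in $F'\times\Z$; it asks for a computable finite-index characteristic subgroup $H\le G_\psi$ such that the kernel of $\out(G_\psi)\to\out(G_\psi/H)$ is torsion-free. Establishing this is the content of the entire Section~\ref{s_mink} of the paper (Theorem~\ref{mink}), and it is arguably the main technical contribution. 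The paper's method is to pass to $Q=G_\psi/\mathcal{Z}(G_\psi)$, prove $Q$ is virtually free with trivial centre and no finite normal subgroups (Proposition~\ref{p_Qa}), and then show that \emph{any} such group has congruences separating torsion by combining a Culler-style realisation argument on the Stallings--Dunwoody deformation space, Dyer's conjugacy separability of virtually free groups, and Minasyan--Osin on pointwise inner automorphisms (Proposition~\ref{p_virt_conjclasses}). Your GBS/$F'\times\Z$ description of $G_\psi$ is correct, but it is not clear how it would supply this ingredient; tracking a $\Z/d$-action on $\aut(F'\times\Z)$ does not by itself produce the required finite quotient detecting all finite-order outer automorphisms.

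A secondary point of comparison: for the mixed Whitehead problem (condition~(4)) the paper again works in the quotient $Q$ rather than in the finite-index subgroup $F'\times\Z$. Since $Q$ is virtually free, hence hyperbolic, one can invoke Dahmani--Guirardel's off-the-shelf algorithm (Theorem~\ref{t_dahm_guir_hyp}) for the orbit problem in $\out(Q)$, and then a short finite search over index-$k$ subgroups and cosets recovers the fibre- and orientation-preserving constraint. Your route through $\aut(F'\times\Z)$ and GBS deformation spaces could plausibly be made to work, but it requires you to build the relative Whitehead algorithm and the lift to $G_\psi$ by hand, whereas the quotient-by-centre trick lets the paper cite existing hyperbolic machinery directly.
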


    Here, a subgroup is polynomial if its conjugacy class is stable by some power of the automorphism and each element grows polynomially by iterative applications of the automorphism in question (see subsection \ref{ss_mt}).

    The automorphisms in Theorem \ref{t_A} may have a very complicated exponentially growing part.
    For instance, decompose a finitely generated free group $F$ as a free product $F = A \ast B_1 \ast \dots \ast B_r$.
    Choose $\varphi_i$ of finite order on $B_i$, and define an automorphism of $F$ as preserving $A$ and being atoroidal on $A$, and as $\ad_{a_i} \circ \varphi_i$ on $B_i$ for $a_i \in A$, with $\ad_{a_i}$ the inner automorphism conjugating by $a_i$.

    To prove Theorem \ref{t_A}, we will need to study specific problems in $F_k \rtimes_{\psi} \gen{t}$, with $[\psi]$ of finite order.
    Among them is the problem of having \textit{congruences that (effectively) separate torsion}.
    This is the case for a class of groups if there is an algorithm that takes a group $G$ in the class, and outputs a subgroup $H < G$, that is characteristic and of finite index, such that $\out(G) \rightarrow \out(G/H)$ has torsion-free kernel (see Definition \ref{d_mink}).
    We must also prove that these groups have a solvable \textit{fibre- and orientation-preserving mixed Whitehead problem}.
    That is to say, given any two tuples of conjugacy classes of tuples of $F_k \rtimes_{\psi} \gen{t}$, we wish to decide whether there exists a fibre- and orientation-preserving outer automorphism sending one to the other.
    These conditions will need to be treated separately.

    We will therefore devote Section \ref{s_mink} to proving that free-by-cyclic groups with finite order monodromy have congruences that effectively separate torsion. In a short digression, we then prove that such groups are also conjugacy separable, though this is not used to prove Theorem \ref{t_A}.
    We will then prove in Section \ref{s_wh} that the same class of groups has a solvable fibre- and orientation-preserving mixed Whitehead problem.
    Finally, we will use these results to prove in Section \ref{s_conj} that $\mathcal{A}$ satisfies the conditions of Theorem \ref{t_reduc} and therefore that the conjugacy problem in $\out(F_m)$ for outer automorphisms in $\mathcal{A}$ is solvable, which will finish the proof of Theorem \ref{t_A}.

    \subsection*{Acknowledgments}
        I thank my doctoral supervisor, François Dahmani, for his time, guidance and insight throughout the development of this proof.
        I also thank the reviewer for their remarkably thorough and meticulous report, that was invaluable in greatly improving this paper and correcting mistakes.
        This research was supported in part by the International Centre for Theoretical Sciences (ICTS) for the program - Geometry in Groups (code: ICTS/GiG2024/07).

\section{Preliminaries}
\label{prelim}


    \subsection{Mapping tori and Reduction}
    \label{ss_mt}

    Given a group $G$, let $\mathcal{Z}(G)$ denote its center and $\aut(G)$ its automorphism group.
    For $g_0 \in G$, we will denote by $\ad_{g_0} \in \aut(G)$ the inner automorphism $g \mapsto g_0^{-1} g g_0$ and by $\inn(G)$ the normal subgroup of $\aut(G)$ made up of all inner automorphisms of $G$.
    We will also consider the group of outer automorphisms of $G$, $\out(G) = \aut(G) / \inn(G)$.
    A subgroup $H$ of $G$ is said to be \textit{characteristic} if it is preserved by all automorphisms of $G$.
        
    Let $m \in \N^*$.
    Consider $F_m$, the free group with $m$ generators.
    As in \cite{levitt}, we recall the following.
    Let $\Phi \in \out(F_m)$ and $\varphi \in \Phi$.
    An element $g \in F_m$ is said to grow at most \emph{polynomially}  under $\varphi$, if the lengths of the reduced words $\left(\varphi^n(g)\right)_n$ grow at most polynomially.
    It is said to grow at least \emph{exponentially} if the lengths of the reduced words $\left(\varphi^n(g)\right)_n$ grow at least exponentially.
    The same can be said of the conjugacy class $[g]$ under $\Phi$, using the length of the cyclically reduced words.
    This is independent of the choice of basis.
    It is a well-known fact (see \cite[Theorem 6.2]{levitt}) that an element $g \in F_m$ grows either polynomially or exponentially under $\varphi$.
    We say that $\varphi$ is \emph{polynomially growing} if every $g \in F_m$ grows polynomially under $\varphi$ and the same can be said of $\Phi$ if every conjugacy class in $F_m$ grows polynomially under $\Phi$.
    If $P$ is a subgroup of $F_m$, it is \emph{polynomial} if there exist $k \in \N^*$ and $\alpha \in \Phi^k$ such that $\alpha(P) = P$ and $\alpha_{\mid P}$ is polynomially growing.
    A family of subgroups is said to be \emph{malnormal} if for any two of the subgroups $P$ and $Q$ and for $g \in F_m$, if $P \cap \inv{g} Q g$ is non-trivial, then $P = Q$ and $g \in P$.
    \begin{prop}\cite[Proposition 1.4]{levitt}
    \label{t_levitt}
        Let $\Phi \in \out(F_m)$.
        \begin{enumerate}
            \item For $g \in F_m$, $[g]$ grows polynomially under $\Phi$ if and only if it is contained in a polynomial subgroup.
            \item Every non-trivial polynomial subgroup is contained in a unique maximal one.
            \item For $\varphi \in \Phi$ and for $g \neq 1$ that grows polynomially under $\varphi$, the maximal polynomial subgroup $P(g)$ containing $g$ is the set of all polynomially growing elements under $\varphi$.
            \item Maximal polynomial subgroups have finite rank, there are finitely many conjugacy classes of them, and they are malnormal.
        \end{enumerate}  
    \end{prop}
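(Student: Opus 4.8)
The plan is to derive the proposition from the existence of a relative train track representative of a power of $\Phi$, together with the growth dichotomy quoted above. Since an element, or a conjugacy class, grows polynomially under $\varphi$ exactly when it does under $\varphi^{k}$ for a fixed $k\geq 1$, the collection of polynomial subgroups is unchanged on replacing $\Phi$ by a power; so fix a relative train track map $f\colon\Gamma\to\Gamma$ (or, more comfortably, a $\mathrm{CT}$ map) representing a suitable power, with filtration $\emptyset=\Gamma_{0}\subsetneq\cdots\subsetneq\Gamma_{N}=\Gamma$ and a marking $\pi_{1}(\Gamma)\cong F_{m}$. Call an $f$-invariant subgraph $\Delta$ (that is, $f(\Delta)\subseteq\Delta$) \emph{polynomial} if the transition matrix of $f|_{\Delta}$ has spectral radius at most $1$; then every element of $\pi_{1}(\Delta)$ grows polynomially under $f|_{\Delta}$, because the reduced length of $f^{n}(e)$ is bounded by its length as an edge-path, which is a polynomially bounded function of $n$. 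The family of polynomial $f$-invariant subgraphs is closed under unions: for two such subgraphs $\Delta_{1},\Delta_{2}$, the union is $f$-invariant, and ordering its edges as those in $\Delta_{1}\cap\Delta_{2}$, then $\Delta_{1}\setminus\Delta_{2}$, then $\Delta_{2}\setminus\Delta_{1}$, makes the transition matrix block upper triangular with diagonal blocks that are principal submatrices of the transition matrices of $f|_{\Delta_{1}}$ and $f|_{\Delta_{2}}$, hence of spectral radius at most $1$. Thus there is a unique maximal polynomial $f$-invariant subgraph $\Gamma_{\mathrm{poly}}$; let $A_{1},\dots,A_{r}$ be its connected components. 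Choosing a spanning tree of $\Gamma$ that restricts to a spanning tree of each $A_{i}$ realizes $F_{m}$ as a free product in which $\pi_{1}(A_{i})$, and more generally the $\pi_{1}$ of the subgraph spanned by any union of the $A_{i}$ together with connecting tree arcs, is a free factor; such subgroups therefore have finite rank, fall into finitely many conjugacy classes, and by the Bass--Serre argument for free products they form a malnormal family.

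I would then establish the carrying criterion behind part (1): for $g\neq 1$, $[g]$ grows polynomially under $\Phi$ if and only if $g$ is conjugate into $\pi_{1}(A_{i})$ for some $i$. The ``if'' direction is easy: a finitely generated subgroup of a free group is quasiconvex, so conjugacy-length in $F_{m}$ and in $\pi_{1}(A_{i})$ are comparable, and since $f|_{\Gamma_{\mathrm{poly}}}$ is polynomially growing, the tightened $f^{n}$-images of a circuit carried by $A_{i}$ have polynomially bounded cyclic length; in particular each $\pi_{1}(A_{i})$ is a polynomial subgroup. For the converse, represent $[g]$ by a circuit $\sigma$ in $\Gamma$ and let $\Delta(\sigma)=\bigcup_{n\geq 0}f^{n}(\sigma)$, an $f$-invariant subgraph. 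If $\Delta(\sigma)$ were not polynomial, its transition matrix would have an irreducible block with Perron--Frobenius eigenvalue $\lambda>1$; an edge of that block occurs in some $f^{n_{0}}(\sigma)$, and the relative train track control of cancellation forces the tightened lengths of $f^{n}(\sigma)$ to grow exponentially (at rate $\lambda$), so $[g]$ would grow exponentially, contrary to hypothesis. Hence $\Delta(\sigma)$ is polynomial, so $\Delta(\sigma)\subseteq\Gamma_{\mathrm{poly}}$ and, $\sigma$ being connected, it is carried by a single $A_{i}$. This proves part (1).

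Parts (2) and (4) now follow. If $P\neq 1$ is a polynomial subgroup, pick $\alpha\in\Phi^{\ell}$ with $\alpha(P)=P$ and $\alpha|_{P}$ polynomially growing; then every $p\in P$ has $[p]$ polynomially growing for $\Phi$, so by part (1) each is conjugate into some $\pi_{1}(A_{i})$. The set of components so used is invariant under the permutation $\Phi$ induces on their conjugacy classes, and using malnormality one sees that, after conjugation, $P$ lies inside the polynomial subgroup $M$ which is the $\pi_{1}$ of the subgraph spanned by the relevant $\Phi$-orbits of components (and connecting tree arcs); $M$ is maximal, since by part (1) a strictly larger polynomial subgroup would involve a strictly larger $\Phi$-invariant set of components, which is impossible. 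So $M=P(g)$ is the unique maximal polynomial subgroup containing $P$, and the maximal polynomial subgroups are exactly the conjugates of these $M$; they are malnormal as a family and of finite rank, with finitely many conjugacy classes. For part (3), fix $\varphi\in\Phi$ and let $S_{\varphi}$ be the set of elements of $F_{m}$ that grow polynomially \emph{as elements} under $\varphi$. It is a subgroup, since $\n{\varphi^{n}(gh)}\leq\n{\varphi^{n}(g)}+\n{\varphi^{n}(h)}$; it is $\varphi$-invariant, and $\varphi|_{S_{\varphi}}$ is polynomially growing, so $S_{\varphi}$ is a polynomial subgroup and hence, if $1\neq g\in S_{\varphi}$, it is contained in $P(g)$ by part (2). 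For the reverse inclusion, $\varphi(g)\in\varphi(S_{\varphi})=S_{\varphi}\subseteq P(g)$ while $\varphi(g)$ also lies in $\varphi(P(g))$, which is again a maximal polynomial subgroup; since these are malnormal and $\varphi(g)\neq 1$, we get $\varphi(P(g))=P(g)$. Thus $\varphi$ restricts to an automorphism of $P(g)$, lying in the same $\out(P(g))$-class as the restriction of any $\alpha\in\Phi^{k}$ preserving $P(g)$ with $\alpha|_{P(g)}$ polynomially growing (the two differ by an inner automorphism of $P(g)$, because $P(g)$ is self-normalizing by malnormality), so $\varphi|_{P(g)}$ is polynomially growing; by quasiconvexity every element of $P(g)$ then grows polynomially under $\varphi$, i.e.\ $P(g)\subseteq S_{\varphi}$. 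Therefore $P(g)=S_{\varphi}$.

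I expect the crux to be the converse half of the carrying criterion in part (1): ruling out that a conjugacy class genuinely crossing an exponentially growing stratum could be made polynomially growing by tightening. This is exactly what the relative train track axioms are designed to control, so the argument ultimately rests on the existence theorem for (improved) relative train track maps; the remaining ingredients are elementary linear algebra of nonnegative integer matrices and standard facts about free factors and malnormal families in free groups.
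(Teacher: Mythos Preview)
The paper does not prove this proposition at all: it is simply quoted from \cite{levitt} and used as a black box. There is therefore nothing to compare your argument against in the present paper; your sketch is an independent attempt to reprove Levitt's result, and it proceeds along the same lines Levitt uses, namely via a relative train track (or CT) representative.

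That said, there is a genuine gap in your treatment of part~(2). You argue that every element of a polynomial subgroup $P$ is conjugate into some $\pi_1(A_i)$, and then assert that ``using malnormality one sees that, after conjugation, $P$ lies inside the polynomial subgroup $M$'' built from a union of components and tree arcs. This step is not justified: knowing that each element of $P$ is individually conjugate into some $\pi_1(A_i)$ does not, by malnormality alone, force $P$ itself to be conjugate into any such subgroup, nor into the fundamental group of a union. What is actually needed is the action of $F_m$ on the Bass--Serre tree of the free-product decomposition determined by $\Gamma_{\mathrm{poly}}$: since edge stabilisers are trivial, two elliptic elements fixing distinct vertices have a hyperbolic product, so if every element of $P$ is elliptic then all of $P$ fixes a single vertex, i.e.\ $P$ is conjugate into a single $\pi_1(A_i)$. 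This also shows that your subgroups $M$ built from several components joined by arcs are not needed (and indeed such a subgraph need not be $f$-invariant, so it is not clear it would be polynomial in your sense). Once this is fixed, the maximal polynomial subgroups are exactly the conjugates of the $\pi_1(A_i)$, and parts~(2) and~(4) follow cleanly; your argument for~(3) via $S_\varphi$ is then sound.
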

    
    For $\varphi \in \aut(F_m)$, we define the \textit{mapping torus} of $\varphi$,
    $$ \mathbb{T}_{\varphi} = F_m \rtimes_{\varphi} \gen{t_{\varphi}} = \gen{x_1, \dots, x_m, t_{\varphi} \mid \forall 1 \leq i \leq m, \inv{t_{\varphi}} x_i t_{\varphi} = \varphi(x_i)}$$
    
    We will sometimes omit the $\varphi$ from $t_{\varphi}$ if there is no ambiguity.
    Note that all elements of $F_m \rtimes_{\varphi} \gen{t}$ can be written $t^l f$ with $l \in \Z$ and $f \in F_m$.
    We say that $F_m$ is the \textit{fibre} of $\mathbb{T}_{\varphi}$, the coset $t F_m$ determines the positive orientation and that the automorphism $\varphi$ is the \textit{monodromy}.
    Let $\alpha$ and $\beta$ be automorphisms of $F_m$.
    An isomorphism $\psi: F_m \rtimes_{\alpha} \gen{t_{\alpha}} \rightarrow F_m \rtimes_{\beta} \gen{t_{\beta}}$
    is said to be \emph{fibre-preserving} if $\psi(F_m) = F_m$.
    It is \emph{orientation-preserving} if $\psi(t_{\alpha}) \in t_{\beta} F_m$.
    Let $\aut_{fo}(\mathbb{T}_{\varphi})$ (resp. $\out_{fo}(\mathbb{T}_{\varphi})$) denote the group of automorphisms (resp. outer automorphisms) that preserve the fibre and orientation.
    
    Two outer automorphisms $\Phi_1, \Phi_2 \in \out(F_m)$ are conjugate if and only if, for some $\varphi_1 \in \Phi_1$ and $\varphi_2 \in \Phi_2$, $F_m \rtimes_{\varphi_1} \gen{t_1}$ and $F_m \rtimes_{\varphi_2} \gen{t_2}$ are isomorphic by a fibre- and orientation-preserving isomorphism (Note that this is namely the case when $[\varphi_1] = [\varphi_2]$).
    Therefore the conjugacy problem in $\out(F_m)$ is equivalent to the fibre- and orientation-preserving isomorphism problem in the class of groups of the form $F_m \rtimes_{\varphi} \gen{t}$ with $[\varphi] \in \out(F_m)$.

    We give the following definitions as in \cite{reduc_conj_pb}.
    
    \begin{defi}
    \label{d_mink}
    	Say that a group $G$ has \emph{congruences separating torsion} if it has a characteristic subgroup $G_0$ of finite index such that the quotient $G \rightarrow G/G_0$ induces a morphism $\out(G) \rightarrow \out(G/G_0)$ whose kernel is torsion free.
        A class of groups $\mathcal{H}$ has congruences separating torsion \emph{effectively} if there is an algorithm that takes the presentation of a group $G$ in $\mathcal{H}$ and outputs one such $G_0$.
    \end{defi}
    
    \begin{rem}
        This means that to prove that a group $G$ has congruences separating torsion, we must find a finite index characteristic subgroup $G_0$ such that any non-trivial finite order outer automorphism of $G$ is still non-trivial on $G/G_0$.
    \end{rem}
    
    Note for instance that the class of finite rank free groups has congruences effectively separating torsion.
    This can be seen for instance by taking the abelianization $F_m \rightarrow \Z^m$, then the natural quotient $\Z^m \rightarrow \left( \Z/3\Z \right)^m$, and by noting that the kernel of $\out(F_m) \rightarrow GL_m(\Z)$ is torsion free (see \cite[Proposition 1]{BaumslagTaylor}), and so is the kernel of $GL_m(\Z) \rightarrow GL_m(\Z/3\Z)$ by a classical theorem of Minkowski.
    
    Given a group $G$, the \emph{generation problem} asks whether there exists an algorithm which takes a list of elements of $G$ and decides whether it is a generating set of $G$.
    
    \begin{defi}
        Let $\mathcal{H}$ be a class of groups.
        $\mathcal{H}$ is \emph{hereditarily algorithmically tractable} if the following holds:
        \begin{itemize}
            \item $\mathcal{H}$ is closed under taking finitely generated subgroups and is effectively coherent: finitely generated subgroups are finitely presentable and a presentation is computable,
            \item the presentations of groups in $\mathcal{H}$ are recursively enumerable,
            \item $\mathcal{H}$ has a uniform solution to the conjugacy problem,
            \item $\mathcal{H}$ has a uniform solution to the generation problem.
        \end{itemize}
    \end{defi}
    
    \begin{defi}
    \label{d_smt}
        Let $\varphi \in \aut(F_m)$ and let $H$ be a self-normalising subgroup of $F_m$ whose conjugacy class is preserved by some power of $\varphi$.
        Let $l > 0$ be the smallest integer such that there exists $a \in F_m$ for which $\varphi^l(H) = \ad_a(H)$.
        The \emph{sub-mapping torus} is the subgroup $\gen{H, t^l \inv{a}}$ of $F_m \rtimes_{\varphi} \gen{t}$.
    \end{defi}
    We may note that the sub-mapping torus is isomorphic to $H \rtimes_{\ad_{\inv{a}} \circ \varphi^l}~\gen{t^l \inv{a}}$.
    Since the subgroup $H$ is self-normalising, it does not depend on the choice of $a$.

    A group is said to be small if it does not contain a non-abelian free subgroup.
    We will now state the reduction theorem that will enable us to prove Theorem \ref{t_A}.
    
    \begin{theorem}\cite[Theorem A]{reduc_conj_pb}
    \label{t_reduc}
        Let $\mathcal{OA}$ be a class of outer automorphisms of $F_m$.
        Let $\mathcal{P}$ be the collection of sub-mapping tori constructed from the maximal polynomial subgroups under each $\Phi \in \mathcal{OA}$.
        Let $\mathcal{P}'$ be the class of all groups in $\mathcal{P}$ and their finitely generated subgroups.
        Assume that $\mathcal{P}$ and $\mathcal{P}'$ satisfy the following:
        \begin{enumerate}
            \item the groups in $\mathcal{P}$ have a recursively enumerable set of presentations and have uniformly solvable conjugacy problem, $\mathcal{P}'$ is a hereditarily algorithmically tractable class of groups, the small subgroups of groups in $\mathcal{P}'$ are finitely generated, and it is possible to decide if a group in $\mathcal{P}'$ coincides with a group that contains it in $\mathcal{P}$.
            \item the fibre- and orientation-preserving isomorphism problem in $\mathcal{P}$ is solvable
            \item $\mathcal{P}'$ has congruences that effectively separate torsion
            \item the fibre- and orientation-preserving mixed Whitehead problem is solvable in every group of $\mathcal{P}$.
        \end{enumerate}
        Then the conjugacy problem in $\out(F_m)$ for automorphisms in $\mathcal{OA}$ is solvable.
    \end{theorem}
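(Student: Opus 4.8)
As recalled above, the conjugacy problem for automorphisms in $\mathcal{OA}$ is equivalent to the fibre- and orientation-preserving (FOP) isomorphism problem for the family of mapping tori $\{\mathbb{T}_{\varphi} : [\varphi]\in\mathcal{OA}\}$; the plan is to solve the latter by exploiting the relatively hyperbolic structure of these groups, whose peripheral structure is given exactly by the sub-mapping tori over the maximal polynomial subgroups, i.e.\ by the collection $\mathcal{P}=\mathcal{P}(\varphi)$ of Definition~\ref{d_smt}.

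First I would establish that this peripheral structure is \emph{canonical}. By Proposition~\ref{t_levitt} there are finitely many conjugacy classes of maximal polynomial subgroups, and they can be computed from a relative train track representative of $\Phi$, so $\mathcal{P}(\varphi)$ is a computable finite family. One then invokes the structural fact that $\mathbb{T}_{\varphi}$ is hyperbolic relative to $\mathcal{P}(\varphi)$ and that the members of $\mathcal{P}(\varphi)$ are precisely its maximal parabolic subgroups; since these peripheral subgroups are themselves never relatively hyperbolic (they are ``thick'', being generated by commuting pieces such as the $\Z^2$'s coming from the $t$-direction), the relatively hyperbolic structure is canonical up to conjugacy and is preserved by every isomorphism $\mathbb{T}_{\varphi_1}\to\mathbb{T}_{\varphi_2}$. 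A FOP isomorphism, preserving in addition the fibre $F_m$ and the orientation, then restricts to FOP isomorphisms between the matched peripheral pieces.

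Next, there are only finitely many bijections $\mathcal{P}(\varphi_1)\to\mathcal{P}(\varphi_2)$, and for each of them condition~(2) decides whether the matched sub-mapping tori are FOP-isomorphic. If no bijection survives, $\Phi_1$ and $\Phi_2$ are not conjugate; otherwise we must decide whether the chosen peripheral isomorphisms extend to a global FOP isomorphism. Coning off the peripheral subgroups turns $\mathbb{T}_{\varphi}$ into a hyperbolic group, and by rigidity arguments for (relatively) hyperbolic groups of the kind used to solve the isomorphism problem, the obstruction to reconstructing a global isomorphism from peripheral data is twofold: first, one must know which tuples of conjugacy classes recording the boundary data --- the locations of the peripheral subgroups together with the conjugacy class of the fibre and the orientation --- can be carried to one another by an outer automorphism of $\mathbb{T}_{\varphi}$, and this is exactly the FOP \emph{mixed} Whitehead problem of condition~(4); second, two admissible peripheral isomorphisms may differ by a finite-order outer automorphism, and to detect whether such a difference can occur one passes, for the relevant groups, to quotients by characteristic finite-index subgroups, which is possible because the groups of $\mathcal{P}'$ have congruences that effectively separate torsion, condition~(3). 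The remaining bookkeeping --- producing presentations of all the sub-mapping tori and their finitely generated subgroups and running the word, conjugacy and generation algorithms that the previous steps invoke --- is legitimate and terminates by condition~(1).

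The step I expect to be the main obstacle is the combination theorem underlying the previous paragraph: proving that compatible peripheral isomorphisms, together with Whitehead-compatibility of the boundary data and vanishing of the torsion obstruction, are not merely necessary but \emph{sufficient} for a global FOP isomorphism, and that this can be checked \emph{effectively}. This requires an effective form of relative co-Hopficity (or Dehn-filling rigidity) for the mapping tori, together with a careful verification that the mixed Whitehead data genuinely determines the isomorphism up to the torsion ambiguity --- which is precisely why conditions~(3) and~(4) must be used in tandem rather than separately.
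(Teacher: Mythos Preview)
This theorem is not proved in the present paper: it is quoted verbatim as \cite[Theorem A]{reduc_conj_pb} and used as a black box, so there is no ``paper's own proof'' to compare against. Your sketch is therefore not in competition with anything here; rather, you are outlining the content of the cited Dahmani--Touikan reduction.

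That said, your outline is broadly faithful to the strategy of \cite{reduc_conj_pb}: the relative hyperbolicity of $\mathbb{T}_\varphi$ with respect to the sub-mapping tori over maximal polynomial subgroups, the canonicity of this peripheral structure, the reduction of the global FOP isomorphism problem to peripheral FOP isomorphisms plus compatibility data, and the respective roles of conditions (1)--(4) are all correctly identified. You are also right that the crux is the effective combination step --- showing that peripheral matches plus Whitehead-compatibility plus the torsion check suffice for a global FOP isomorphism --- and that this is where the bulk of the work in \cite{reduc_conj_pb} lies. Your sketch does not supply that argument (nor the precise form of the torsion obstruction and why congruences separating torsion resolve it), so as written it is a plausible roadmap rather than a proof; but since the paper itself defers entirely to \cite{reduc_conj_pb}, nothing more is required here.
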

    
    Let us denote by $\mathcal{A}$ the set of outer automorphisms whose polynomially growing parts are of finite order. More specifically, it is the set of outer automorphisms $\Phi \in \out(F_m)$ such that for $\varphi \in \Phi$, and for $P$ a polynomial subgroup for $\varphi$, there exist $k \in \N^*$ and $\gamma_P \in F_m$ such that $\ad_{\gamma_P} \circ \varphi^k$ restricts to identity on $P$.
    
    \begin{rem}
    \label{r_smt}
        Let $\Phi \in \mathcal{A}$, $\varphi \in \Phi$, and take $P$ a self-normalizing polynomial subgroup for $\varphi$.
        The sub-mapping torus constructed from $P$ can be written $P \rtimes_{\ad_{\inv{a}} \circ \varphi^l} \gen{t^l \inv{a}}$, for certain $a$ and $l$ as in the definition.
        Now $P$ is free, and $[\ad_{\inv{a}} \circ \varphi^l] \in \out(P)$ is of finite order.
        Indeed, $l$ is the smallest integer such that $\varphi^l(P)$ is a conjugate of $P$ in $F_m$, therefore $l \mid k$.
        Now $\left(\ad_{\inv{a}} \circ \varphi^l\right)^{\frac{k}{l}} = \ad_{b} \circ \varphi^k$ with $b = \inv{a} \varphi^l(\inv{a}) \dots \varphi^{\left(\frac{k}{l} - 1\right)l}(\inv{a})$.
        Now $\ad_{\gamma_P} \circ \varphi^k$ restricts to identity on $P$, so on $P$,
        $$\ad_b \circ \varphi^k = \ad_b \circ \varphi^k \circ \inv{\left(\ad_{\gamma_P} \circ \varphi^k\right)} = \ad_{b \inv{\gamma_P}}$$
        And since $P$ is self-normalizing and is preserved by conjugation by $b \inv{\gamma_P}$ in $F_m$, then $b \inv{\gamma_P} \in P$.
        Therefore, in $\out(P)$, $[\ad_{\inv{a}} \circ \varphi^l]^{\frac{k}{l}} = [\ad_{b \inv{\gamma_P}}] = [\id_P]$.
        Therefore every sub-mapping torus constructed from a $\Phi \in \mathcal{A}$ and a finitely generated self-normalizing polynomial subgroup, is of the form $F_M \rtimes_{\alpha} \gen{t_{\alpha}}$, with $[\alpha] \in \out(F_M)$ of finite order (and $M$ not necessarily equal to $m$).
        In particular, this is the case for maximal polynomial subgroups, since they are finitely generated and self-normalizing, by Theorem \ref{t_levitt}.
    \end{rem}

    
    \subsection{Realization of an automorphism and Center}
    
    Let $X$ be a connected graph, $v_0 \in X^{(0)}$ a vertex, and let $\sigma: X \rightarrow X$ be an isometry.
    Then we may construct an outer automorphism of the fundamental group $\pi_1(X, v_0)$ induced by $\sigma$.
    Indeed, the isometry $\sigma$ induces an isomorphism $\sigma_*: \pi_1(X, v_0) \rightarrow \pi_1(X, \sigma(v_0))$.
    We then choose a path $c$ from $v_0$ to $\sigma(v_0)$ in $X$.
    Concatenating paths provides us with an isomorphism
    $$\alpha_c: \begin{cases} \pi_1(X, \sigma(v_0)) &\rightarrow \pi_1(X, v_0) \\
    					x &\mapsto c x \bar{c}
    			\end{cases}	
    $$
    where $\bar{c}$ is the path $c$ taken in the other direction.
    Then, by composition, we get an automorphism $\varphi_c = \alpha_c \circ \sigma_* \in \aut(\pi_1(X, v_0))$.
    In fact, through this process we can construct the entire outer class $\Phi$ of $\varphi_c$ by considering all the possible paths $c$.
    Furthermore, this classical construction yields a group homomorphism
    $$ \Omega_{X,v_0} : \begin{cases}
        Isom(X) &\rightarrow \out(\pi_1(X, v_0)) \\
        \sigma &\mapsto \Phi_{\sigma}
    \end{cases}
    $$
    
    We recall the following realization theorem due to Culler in \cite{culler_realization}, stated previously under a more geometric form in \cite[Satz 3.1(a)]{Zimmermann_1981} by Zimmerman, and also due independently to Khramtsov \cite{Khram85}.
    
    \begin{theorem}[Realization theorem]
    \label{t_nielsen}
        \cite[Theorem 2.1]{culler_realization}
        Let $\Phi \in \out(F_m)$ be of finite order.	
        There exists a connected graph $X$ and an isometry $\sigma: X \rightarrow X$, such that there is an isomorphism $\psi : \pi_1(X, v_0) \xrightarrow{\cong} F_m$, with $v_0 \in X$ some vertex, and such that $\sigma$ induces $\Phi$.
        
        In other words, for $\alpha \in \Omega_{X, v_0}(\sigma)$, $[\psi \circ \alpha \circ \inv{\psi}] = \Phi$.
    \end{theorem}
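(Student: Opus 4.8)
The plan is to trade the finite-order outer automorphism for a finite extension of $F_m$, and then extract the graph and the isometry from Bass--Serre theory. Let $n \geq 1$ be the order of $\Phi$ and fix $\varphi \in \Phi$; for $m \leq 1$ the statement is straightforward (realize $\pm\id$ on a suitably subdivided circle), so assume $m \geq 2$. Then $\varphi^n \in \inn(F_m)$, say $\varphi^n = \ad_w$ with $w \in F_m$, and applying $\varphi$ to this relation gives $\ad_{\varphi(w)} = \ad_w$, hence $\varphi(w) = w$ because $F_m$ has trivial center. A short computation then shows that $\inv{w} t_{\varphi}^{n}$ is central in $\mathbb{T}_{\varphi} = F_m \rtimes_{\varphi} \gen{t_{\varphi}}$, so I would set $G \defn \mathbb{T}_{\varphi} / \gen{\inv{w} t_{\varphi}^{n}}$. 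Writing $t$ for the image of $t_{\varphi}$, the fibre $F_m$ still embeds in $G$ (its intersection with the central subgroup $\gen{\inv{w} t_{\varphi}^{n}}$ is trivial), $F_m$ is normal in $G$, one has $t^n = w \in F_m$, and conjugation by $t$ induces $\varphi$ on $F_m$; thus $1 \to F_m \to G \to \Z/n\Z \to 1$ is an extension whose conjugation action sends a generator of $\Z/n\Z$ to $\Phi$.

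Since $G$ is finitely generated and contains $F_m$ with finite index, it is virtually free; by the theorem of Karrass--Pietrowski--Solitar (equivalently, by Stallings' theorem on ends together with Dunwoody's accessibility), $G$ is the fundamental group of a finite graph of finite groups and hence acts cocompactly on a tree $T$ with all vertex and edge stabilizers finite. As $F_m$ is torsion free its action on $T$ is free, so $X \defn T / F_m$ is a finite connected graph, and the covering $T \to X$ exhibits $F_m$ as the deck group, identifying it with $\pi_1(X, v_0)$ once a lift of a basepoint $v_0 \in X$ has been fixed. Because $F_m$ is normal in $G$, the element $t$ sends $F_m$-orbits in $T$ to $F_m$-orbits and so descends to a simplicial self-map $\sigma$ of $X$; after passing to a barycentric subdivision of $X$ (and of $T$) we may assume $\sigma$ acts without inversions, hence is an isometry of the subdivided graph, which I still denote $X$.

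It remains to check that $\sigma$ induces $\Phi$. The formula $t \cdot (g \cdot y) = (t g t^{-1}) \cdot (t \cdot y)$ for $g \in F_m$, $y \in T$, combined with the defining relation $\inv{t} x t = \varphi(x)$ of $\mathbb{T}_{\varphi}$, shows that conjugation by $t$ carries the deck transformation $g$ to $\varphi(g)$ (after fixing conventions and, if necessary, replacing $t$ by $\inv{t}$ at the outset); equivalently, $\sigma$ induces $\varphi$ on the deck group $\pi_1(X, v_0) \cong F_m$, up to the change-of-basepoint isomorphism accounting for the fact that $t$ need not fix the chosen lift of $v_0$ — that is, up to an inner automorphism. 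Hence, for $\psi : \pi_1(X, v_0) \xrightarrow{\cong} F_m$ the identification above and $\alpha \in \Omega_{X, v_0}(\sigma)$, we obtain $[\psi \circ \alpha \circ \inv{\psi}] = [\varphi] = \Phi$, as desired.

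The only genuinely non-routine ingredient is the structure theorem for finitely generated virtually free groups; once $G$ is known to act cocompactly on a tree with finite stabilizers, everything else is elementary covering-space and Bass--Serre bookkeeping. An alternative would be to let $\gen{\Phi}$ act on the spine of Culler--Vogtmann Outer space and invoke a fixed-point theorem for finite group actions on contractible complexes, but verifying that the resulting fixed point is realized by an honest $\sigma$-invariant metric graph needs comparable input, so I would favor the extension-theoretic route sketched above.
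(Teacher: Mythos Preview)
The paper does not prove this theorem itself; it is quoted from Culler. However, the proof of Proposition~\ref{p_virt_conjclasses} explicitly applies ``a variation on Culler's proof,'' and your argument is precisely that strategy specialized to a cyclic subgroup: construct a finite extension $G$ of $F_m$ realizing the given outer action, observe that $G$ is virtually free, invoke Karrass--Pietrowski--Solitar to make $G$ act on a tree $T$ with finite stabilizers, and read off the graph $X = F_m \backslash T$ together with the isometry induced by $G/F_m$. The one difference lies in the first step: Culler (as recounted in the paper) appeals to Eilenberg--MacLane obstruction theory, using that $\mathcal{Z}(F_m)$ is trivial, to produce the extension abstractly, whereas you build it by hand as $\mathbb{T}_{\varphi}/\mathcal{Z}(\mathbb{T}_{\varphi})$ --- exactly the group $Q$ of Proposition~\ref{l_Q_mink}. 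This shortcut is available because $\gen{\Phi}$ is cyclic; for an arbitrary finite subgroup of $\out(F_m)$ one genuinely needs the cohomological argument, so Culler's route is more general while yours is more concrete in the case at hand. Your handling of the convention issue (the possible swap $t \leftrightarrow \inv{t}$, so that $\sigma$ induces $\Phi$ rather than $\inv{\Phi}$) is slightly informal but harmless.
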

    
    \begin{rem}
    \label{r_nielsen}
        If $\Phi_1, \Phi_2 \in \out(F_m)$ are conjugate, they are realized by the same isometry.
        Assume $\Phi_1 = \Theta \Phi_2 \inv{\Theta}$ and that $\Phi_2$ is realized as in the theorem.
        Consider $\psi' = \theta \circ \psi : \pi_1(X, v_0) \rightarrow F_m$ with $\theta \in \Theta$.
        Then
        $$\Phi_1 = \Theta [\psi \circ \alpha \circ \inv{\psi}] \inv{\Theta} = [\psi' \circ \alpha \circ \inv{\psi'}]$$
        This however is not enough to test conjugacy for finite order outer automorphisms.
        Indeed, one needs be able to enumerate all possible realizations.
        This can be done using a special case of the equivariant Whitehead algorithm described in \cite{klv}.
        Alternatively, one may test the conjugacy of two finite order outer automorphisms using Khramtsov's result in \cite{Khram95}.
    \end{rem}
    
    We recall the following proposition, well-known to experts. (See, for instance, \cite[Proposition 4.1]{LevGBS}.)
    
    \begin{prop}
    \label{pcenter}
    	Let $\Phi \in \out(F_m)$ be of finite order $k$ and let $\varphi \in \Phi$.	
    	If $F_m \rtimes_{\varphi} \gen{t}$ is not isomorphic to $\Z^2$, then $\mathcal{Z}(F_m \rtimes_{\varphi} \gen{t})$ is infinite cyclic.
        In fact, $\mathcal{Z}(F_m \rtimes_{\varphi} \gen{t}) = \gen{t^k f_0}$, where $f_0$ is the element in $F_m$ such that $\varphi^k = \ad_{\inv{f_0}}$. Furthermore, $t$ and $f_0$ commute.
    \end{prop}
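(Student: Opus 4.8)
The plan is to compute $\mathcal Z(G)$ for $G \defn F_m \rtimes_\varphi \gen{t}$ directly. Every element of $G$ is written uniquely as $t^l f$ with $l \in \Z$ and $f \in F_m$, and such an element is central exactly when it commutes with $t$ and with every $x \in F_m$. From the defining relation $\inv t x t = \varphi(x)$ one gets $x t^l = t^l \varphi^l(x)$, so $t^l f$ commutes with $t$ iff $\varphi(f) = f$, and it commutes with all $x \in F_m$ iff $\varphi^l(x) = f x \inv f$ for every $x$, i.e.\ iff $\varphi^l = \ad_{\inv f}$ (with the convention $\ad_{g_0}\colon g \mapsto \inv g_0 g g_0$). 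Thus $t^l f \in \mathcal Z(G) \iff \varphi(f)=f$ and $\varphi^l = \ad_{\inv f}$.

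Next I would feed in the finite-order hypothesis. Since $\ad_{\inv f}$ is inner, $\varphi^l = \ad_{\inv f}$ forces $[\varphi]^l = [\id]$ in $\out(F_m)$, hence $k \mid l$; write $l = kj$. Using $\varphi^k = \ad_{\inv{f_0}}$ (and noting that, although $g_0 \mapsto \ad_{g_0}$ is an anti-homomorphism, powers of a single element cause no trouble) we get $\varphi^l = (\varphi^k)^{j} = \ad_{f_0^{-j}}$; comparing with $\varphi^l = \ad_{\inv f}$ and recalling that $F_m$ has trivial centre gives $f = f_0^{\,j}$. So every central element has the form $t^{kj} f_0^{\,j}$ for some $j \in \Z$.

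The only step requiring a genuine (if small) argument is that $\varphi$ fixes $f_0$, which is also the last assertion of the statement. Since $\varphi$ commutes with $\varphi^k = \ad_{\inv{f_0}}$, and since $\varphi \circ \ad_{g} \circ \inv\varphi = \ad_{\varphi(g)}$, we obtain $\ad_{\varphi(f_0)^{-1}} = \ad_{\inv{f_0}}$, whence $\varphi(f_0) = f_0$ again by triviality of $\mathcal Z(F_m)$; in particular $t$ and $f_0$ commute. Consequently the condition $\varphi(f) = f$ is automatic once $f = f_0^{\,j}$, and $t^{kj} f_0^{\,j} = (t^k f_0)^{j}$; conversely each such power is central, by reversing the computation of the previous paragraph. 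Hence $\mathcal Z(G) = \gen{t^k f_0}$, and this is infinite cyclic because $t^k f_0$ maps to $t^k \neq 1$ under the retraction $G \to G/F_m \cong \Z$ and so has infinite order. The main obstacle here is purely organisational: keeping the convention for $\ad$ and the direction of the semidirect-product relation consistent, and isolating the little identity $\varphi(f_0) = f_0$.
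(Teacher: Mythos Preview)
Your argument is correct and complete. The paper itself does not prove this proposition: it is stated as ``well-known to experts'' with a reference to \cite{bridson}, so there is no proof to compare against. Your direct computation---characterising central elements by commutation with $t$ and with $F_m$, then using that $[\varphi]$ has exact order $k$ to force $k \mid l$ and $f = f_0^{\,j}$, and finally deducing $\varphi(f_0) = f_0$ from $\varphi \circ \ad_{g} \circ \inv\varphi = \ad_{\varphi(g)}$---is the standard one and is carried out cleanly.

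One small caveat: you invoke the triviality of $\mathcal Z(F_m)$ twice (to pass from $\ad_{\inv f} = \ad_{f_0^{-j}}$ to $f = f_0^{\,j}$, and from $\ad_{\varphi(f_0)^{-1}} = \ad_{\inv{f_0}}$ to $\varphi(f_0) = f_0$). This requires $m \geq 2$. For $m = 1$ the statement as written is in any case defective (the element $f_0$ is not uniquely determined, and when $k = 1$ the centre of $\Z \times \Z$ is not cyclic), and the paper treats $m = 1$ separately in Remark~\ref{r_case_m1}; it would be worth noting this restriction explicitly.
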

    
    \begin{rem}
    \label{r_center}
        The converse is true. In fact if $\Phi$ is not of finite order, the center is trivial. (See, for instance, \cite[Lemma 3.4]{bridson})
    \end{rem}

\section{Congruences effectively separating torsion}
\label{s_mink}

This section is dedicated to proving the following theorem.

\begin{theorem}
\label{mink}
    The class of all mapping tori of the form $F_m \rtimes_{\varphi} \gen{t}$, with $m \geq 1$ and $\varphi \in \aut(F_m)$ such that $[\varphi] \in \out(F_m)$ is of finite order, has congruences effectively separating torsion. 
\end{theorem}

\begin{rem}
\label{r_case_m1}
    The case of $m = 1$ can be done explicitly.
    Indeed, if $m = 1$ we are dealing either with $\Z \times \Z$, or $\Z \rtimes \Z = \gen{f, t \mid \inv{t} f t = \inv{f}}$.
    In the first case, by a classical theorem of Minkowski, the kernel of $GL_2(\Z) \rightarrow GL_2(\Z/3\Z)$ is torsion-free.
    Now $\out(\Z \rtimes \Z)$ is finite, of order 4, and its non-trivial elements are the classes of
    $\alpha : \begin{cases}
        f &\mapsto f \\
        t &\mapsto \inv{t}
    \end{cases},
    \beta : \begin{cases}
        f &\mapsto f \\
        t &\mapsto tf
    \end{cases}$
    and their composition
    $\alpha \circ\beta : \begin{cases}
        f &\mapsto f \\
        t &\mapsto \inv{t}f
    \end{cases}$.
    Now, $[\alpha], [\beta]$ and $[\alpha \circ \beta]$ are of finite order 2.
    Consider the subgroup $\gen{f^2, t^4}$, which is characteristic and of finite index.
    All three outer automorphisms descend to non-trivial outer automorphisms of the quotient, $\Z/2\Z \times \Z/4\Z$.
    \end{rem}

It remains to prove the theorem for $m \geq 2$.


    \subsection{Congruences of the quotient by the center}

    We will first prove the following proposition, which will later allow us to prove Theorem \ref{mink}.

    \begin{prop}
    \label{l_Q_mink}
        The class of all groups of the form
        $$ Q = F_m \rtimes_{\varphi} \gen{t}/\mathcal{Z}(F_m \rtimes_{\varphi} \gen{t})
        $$
        with $m \geq 2$, and $\varphi \in \aut(F_m)$ such that $[\varphi] \in \out(F_m)$ is of finite order $k \geq 1$, has congruences separating torsion effectively.
    \end{prop}

    We will prove this result in the following two steps.

    \begin{prop}
    \label{p_Qa}
        If $Q$ is as in Proposition \ref{l_Q_mink}, then it is virtually free, has no non-trivial finite normal subgroups and its center is trivial.
    \end{prop}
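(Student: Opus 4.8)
The plan is to deduce all three properties from Proposition~\ref{pcenter} together with the elementary remark that $G \defn F_m \rtimes_\varphi \gen{t}$ is torsion-free (it is an extension of $\gen{t}\cong\Z$ by $F_m$, so a finite-order element maps to $0$ in $\Z$ and hence lies in $F_m$, forcing it to be trivial). Throughout, write $Z \defn \mathcal{Z}(G) = \gen{t^k f_0} \cong \Z$, where $\varphi^k = \ad_{\inv{f_0}}$ and $t$ commutes with $f_0$, so that $Q = G/Z$. For virtual freeness: since $t$ and $f_0$ commute, $(t^k f_0)^n = t^{kn} f_0^{\,n}$, which lies in $F_m$ only for $n=0$, so $F_m \cap Z = 1$ and the image of $F_m$ in $Q$ is a copy of $F_m$. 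Moreover $t^k = (t^k f_0)\inv{f_0} \in Z F_m$, and since $Z$ is central, $Z F_m = \gen{F_m,\, t^k f_0} = \gen{F_m, t^k}$; call this subgroup $H$. It has index $k$ in $G$ (its image in $G/F_m \cong \Z$ is $k\Z$), so the image $\bar F_m$ of $F_m$ in $Q$, which equals $H/Z$, has index $k$ in $Q$. Hence $Q$ is virtually free of rank $m$, and since $m \geq 2$ the subgroup $\bar F_m \cong F_m$ is non-abelian.

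For the absence of non-trivial finite normal subgroups, let $N \triangleleft Q$ be finite and let $\tilde N$ be its preimage under $G \to Q$, a normal subgroup of $G$ fitting into $1 \to Z \to \tilde N \to N \to 1$. Thus $\tilde N$ is virtually infinite cyclic, and being a subgroup of the torsion-free group $G$ it is infinite cyclic, say $\tilde N = \gen{w}$ with $Z = \gen{w^{|N|}}$. For any $g \in G$, conjugation by $g$ preserves $\tilde N = \gen{w}$ and fixes the central element $w^{|N|}$; if it sent $w$ to $\inv{w}$ we would get $w^{2|N|}=1$, contradicting that $w$ has infinite order. Hence every element of $G$ centralizes $w$, i.e. $w \in Z = \gen{w^{|N|}}$, which forces $|N|=1$, so $N$ is trivial.

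For the triviality of the center: any element of $\mathcal{Z}(Q)$ lying in $\bar F_m$ would be central in the non-abelian free group $\bar F_m$, hence trivial, so $\mathcal{Z}(Q) \cap \bar F_m = 1$ and $\mathcal{Z}(Q)$ embeds into the finite group $Q/\bar F_m$. Therefore $\mathcal{Z}(Q)$ is a finite normal subgroup of $Q$, and by the previous paragraph it is trivial.

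The computations above are routine; the only conceptual input beyond Proposition~\ref{pcenter} is that $\varphi^k$ is inner, which is exactly what makes $Z$ a direct factor of the finite-index subgroup $H = \gen{F_m, t^k} \cong F_m \times \Z$ and hence drives virtual freeness — so this is where I expect the argument to actually need the hypothesis. (Alternatively one could realize $\Phi$ by a finite-order graph isometry via Theorem~\ref{t_nielsen} and pass to the $k$-fold cyclic cover, whose total space is $X \times S^1$, to see virtual freeness geometrically; the algebraic route is shorter.)
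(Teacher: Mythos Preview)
Your proof is correct. The virtual-freeness argument is essentially identical to the paper's. Where you genuinely diverge is in the treatment of finite normal subgroups and the center.

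The paper argues inside $Q$: given a finite normal $K \lhd Q$, it first observes $K \cap \bar F_m = 1$ so $K$ embeds in $Q/\bar F_m \cong \Z/k\Z$ and is cyclic, generated by some $\tau = \bar t^a \bar f_1$ with $1 \le a \le k-1$; an explicit coset computation then shows every conjugate of $\tau$ equals $\tau$, so $K$ is central. Finally, lifting a central element to $G$ and unwinding the relation forces $\varphi^a \in \inn(F_m)$, contradicting that $\Phi$ has order exactly $k$. Thus the paper uses the precise order of $\Phi$ to kill the center, and obtains the absence of finite normal subgroups as a corollary.

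You instead lift to $G$: the preimage $\tilde N$ of a finite normal $N$ is torsion-free and virtually $\Z$, hence infinite cyclic, and the elementary observation that conjugation in $G$ fixes $Z$ and so cannot invert a generator of $\tilde N$ forces $\tilde N \subseteq \mathcal Z(G) = Z$, whence $N = 1$. You then get triviality of $\mathcal Z(Q)$ for free, since it is a finite normal subgroup. This route is shorter and more structural: it never touches the exact value of $k$ and replaces the paper's conjugation computation by the single fact that torsion-free virtually cyclic groups are $\Z$. What the paper's approach buys, by contrast, is that everything stays inside $Q$ and is entirely explicit, with no appeal to the classification of virtually cyclic groups.
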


    Note that the center of any virtually free group that is not virtually $\Z$, is in fact finite.
    Indeed, by a corollary of Stallings' theorem, such a group acts properly discontinuously on a tree.
    And since it is not virtually $\Z$, it has at least two loxodromic lines that must be fixed by the center.
    Hence the center is finite.
    Therefore the triviality of the center is automatic here, though it also follows explicitly from the proof of the proposition.
    
    \begin{prop}
    \label{p_virt_conjclasses}
        Let $V$ be a virtually free group with trivial center.
        
        Then $\out(V)$ has finitely many conjugacy classes of finite order elements.
        
        Furthermore, there is an effective procedure to compute a finite list in $\aut(V)$ containing a representative of each conjugacy class of finite order elements of $\out(V)$.

        Finally, the class of all virtually free groups with trivial center, that also have no non-trivial finite normal subgroups, has congruences separating torsion effectively.
    \end{prop}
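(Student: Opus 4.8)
The plan is to treat the three assertions separately: the first two rest on the geometry of automorphism groups of virtually free groups, while the last reduces --- via a characteristic free subgroup --- to the already-known case of free groups.

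\emph{Reductions.} First note that $V$ is finitely generated (in our application it is a quotient of $F_m \rtimes_{\varphi} \gen{t}$). A finitely generated virtually free group contains a free subgroup of finite index (Karrass--Pietrowski--Solitar); replacing it by its normal core gives a free \emph{normal} subgroup of finite index, and intersecting all subgroups of $V$ of a fixed bounded index then produces a subgroup $F \leq V$ that is free, of finite index, and \emph{characteristic} --- all of this effectively, since finitely generated virtually free groups are algorithmically tractable. If $F$ has rank at most $1$ then $V$ is virtually cyclic, hence has finite and computably presentable outer automorphism group (and if moreover $V$ has no non-trivial finite normal subgroup then $V \cong D_\infty$ with $\out(V)$ trivial), so all three assertions are immediate; I therefore assume from now on that $F$ is non-abelian.

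\emph{Finitely many conjugacy classes and the effective list.} The input here is the theory of Krsti\'c and Vogtmann on automorphisms of free-by-finite groups: $\out(V)$ acts properly and cocompactly on a contractible complex $K$ --- the spine of the deformation space of $V$-trees with finite stabilisers --- in such a way that every finite subgroup of $\out(V)$ fixes a vertex of $K$. Since there are finitely many $\out(V)$-orbits of vertices, every finite subgroup is conjugate into one of finitely many (finite) vertex stabilisers, which gives finitely many conjugacy classes of finite subgroups, hence of finite-order elements. For the effective list I would use that a graph-of-finite-groups decomposition of $V$ can be computed, that the decompositions occurring in the deformation space have boundedly many edges (Dunwoody accessibility) with finite vertex and edge groups, and hence that $K$, its cell stabilisers, and a finite presentation of $\out(V)$ are computable; it then suffices to enumerate the elements of a representative of each conjugacy class of vertex stabilisers and lift each arbitrarily to $\aut(V)$. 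I expect this to be the main obstacle: the finiteness statement is soft, but extracting an actual algorithm from the structure theory --- computing the decomposition, controlling the finitely many decompositions in the deformation space together with their stabilisers, and certifying that the output meets every conjugacy class --- is the genuinely technical part.

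\emph{Congruences effectively separating torsion.} Assume in addition that $V$ has no non-trivial finite normal subgroup. Since $F$ is non-abelian, $\mathcal{Z}(F) = 1$, so $C_V(F)$ meets $F$ trivially and therefore embeds in the finite group $V/F$; being also normal in $V$, it is a finite normal subgroup, hence trivial. In particular \emph{every automorphism of $V$ restricting to the identity on $F$ is the identity}. Now let $F_1 \leq F$ be the computable characteristic finite-index subgroup given by \cite[Corollary 3.2]{reduc_conj_pb}, for which $\ker(\out(F) \to \out(F/F_1))$ is torsion-free; then $F_1$ is characteristic in $V$ (any automorphism of $V$ preserves $F$, hence restricts to an automorphism of $F$, which preserves $F_1$), of finite index, and computable, and I claim $G_0 := F_1$ works. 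Indeed, let $[\phi] \in \out(V)$ have finite order and lie in $\ker(\out(V) \to \out(V/G_0))$; choosing a representative that induces an inner automorphism $\ad_{\bar v}$ of $V/G_0$ and replacing $\phi$ by $\ad_{v^{-1}} \circ \phi$, we may assume $\phi$ induces the identity on $V/G_0$, so $\phi|_F$ induces the identity on $F/F_1$. As some power of $\phi$ is inner in $V$ and $V/F$ is finite, some power of $\phi|_F$ is inner in $F$, so $[\phi|_F]$ is a finite-order element of $\out(F)$; being also in the torsion-free group $\ker(\out(F) \to \out(F/F_1))$, it is trivial, so $\phi|_F = \ad_w$ with $w \in F$. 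Then $\ad_{w^{-1}} \circ \phi$ is the identity on $F$, hence on $V$, so $[\phi] = 1$. Therefore $\ker(\out(V) \to \out(V/G_0))$ is torsion-free, as required --- and this step is comparatively clean once \cite[Corollary 3.2]{reduc_conj_pb} is granted.
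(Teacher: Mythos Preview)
Your treatment of the first two assertions matches the paper's: both locate finite subgroups of $\out(V)$ as fixing points in the Stallings--Dunwoody deformation space of $V$, where point stabilisers are finite. The paper makes the fixed-point step explicit via the extension trick (form $1 \to V \to \Gamma \to H \to 1$ using Eilenberg--MacLane theory and the trivial-center hypothesis, realise the virtually free $\Gamma$ as $\pi_1$ of a finite graph of finite groups, and read off a $V$-tree fixed by $H$), and it resolves the effectivity issue you rightly flag as the main obstacle by citing \cite[Theorem 7.1, Lemma 7.5]{dahm_guir} for a computable fundamental domain and its finite stabilisers $\out_{\tau_i}(V)$; this is precisely the algorithmic input you were reaching for via Krsti\'c--Vogtmann and Dunwoody accessibility.

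For the third assertion your route is correct but genuinely different from the paper's. The paper does not pass through a characteristic free subgroup; instead it applies Proposition~\ref{p_conj_sep_mink} (i.e.\ \cite[Proposition 3.1]{reduc_conj_pb}) directly to $V$, feeding it three ingredients: conjugacy separability of virtually free groups (Dyer), the fact that in a hyperbolic group without non-trivial finite normal subgroups every pointwise inner automorphism is inner (Minasyan--Osin), and the finite list just produced. Your argument --- pull back the subgroup $F_1$ of \cite[Corollary 3.2]{reduc_conj_pb} along a characteristic free finite-index $F \leq V$, then use $C_V(F)=1$ to promote $\phi|_F = \id$ to $\phi = \id$ --- bypasses both Dyer and Minasyan--Osin entirely and needs only the free-group case as a black box. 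The trade-off: your approach is more self-contained and elementary, while the paper's reuses a general criterion already established in \cite{reduc_conj_pb} at the cost of importing heavier external results.
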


    Proposition \ref{l_Q_mink} immediately follows from these.

    \begin{proof}[Proof of Proposition \ref{p_Qa}]
        Recall that if $\varphi^k = \ad_{\inv{f_0}}$, then $x = t^k f_0$ is such that $\mathcal{Z}(F_m \rtimes_{\varphi} \gen{t}) = \gen{x}$.
        Note that since $\gen{x} \cap F_m = \{1\}$, then $F_m \hookrightarrow{} Q$, ie. $Q$ contains a copy of $F_m$.
        Furthermore, $F_m$ is of finite index $k$ in $Q$ as $Q/F_m \cong \Z/k\Z$ ($Q = \bigsqcup\limits_{i = 0}^{k-1} t^i \bar{F_m})$.
        Hence, $Q$ is virtually free.
        In fact, if $k = 1$, $Q$ is free of rank at least $2$ and the result is clear.
        Now suppose $k > 1$.
        
        Let us show that $Q$ has no non-trivial finite normal subgroups.
        Assume for contradiction that such a subgroup $K < Q$ exists.
        Note that since $K$ is finite, it has no elements of infinite order, and hence $K \cap F_m = \{1\}$.
        Therefore $K \hookrightarrow Q/F_m \cong \Z/k\Z$. 
        So $K$ is finite cyclic, generated by some element $\tau = \bar{t}^a \bar{f_1}$ of $Q$, with $f_1 \in F_m$ and $a \in \intint{1}{k - 1}$.
        
        Let us prove that $K$ is in fact central.
        Take $y \in F_m \rtimes_{\varphi} \gen{t}$ and consider $\inv{\bar{y}} \tau \bar{y}$.
        In $F_m \rtimes_{\varphi}\gen{t}$, $\inv{y} (t^a f_1) y$ can be written $t^a f$ for some $f \in F_m$ since, conjugates in $F_m \rtimes_{\varphi}\gen{t}$ must have the same exponent of $t$.
        Now by denoting $f' = \inv{f_1} f$, we get $t^a f = (t^a f_1) f'$, so that $\inv{\bar{y}} \tau \bar{y} = \bar{t}^a \bar{f} = \tau \bar{f'}$.
        On the other hand, since $K$ is normal, $\inv{\bar{y}} \tau \bar{y} \in K$, and since $K$ is also cyclic generated by $\tau$, $\inv{\bar{y}} \tau \bar{y} = \tau^l$, for some $l \in \Z$.
        Therefore, $\inv{\bar{y}} \tau \bar{y} = \tau^l = \tau \bar{f'}$.
        Hence $\tau^{l-1} = \bar{f'} \in F_m \cap K$.
        Now $\tau \neq 1$, since we took $K$ to be non-trivial,  so $l = 1$.
        Therefore, for all $\bar{y} \in Q$, $\inv{\bar{y}} \tau \bar{y} = \tau$.
        So $K \subset \mathcal{Z}(Q)$.
        Furthermore, note that $\mathcal{Z}(Q)$ is also finite cyclic since $m \geq 2$ and therefore, as for $K$, $F_m \cap\mathcal{Z}(Q) = \{1\}$.
        We may then assume that $K = \mathcal{Z}(Q)$.
        
        Now since $\tau = \bar{t^a} \bar{f_1}$ is central in $Q = F_m \rtimes_{\varphi}\gen{t}/\gen{t^k f_0}$, for all $f \in F_m$ there exists $w \in \Z$ such that $\inv{(t^a f_1)} f (t^a f_1) = f (t^k f_0)^w \in F_m$.
        Also, for all $f \in F_m$, $\inv{(t^a f_1)} f (t^a f_1) \in F_m$.
        Thus $w = 0$ and $t^a f_1$ commutes with all $F_m \hookrightarrow F_m \rtimes_{\varphi} \gen{t}$.
        Recall that $\Phi \in \out(F_m)$ is of finite order $k>1$.
        Now we have just shown that, for $f \in F_m \hookrightarrow F_m \rtimes_{\varphi} \gen{t}$, $\inv{f_1} ( t^{-a} f t^a) f_1 = f$.
        In other words, in $\aut(F_m)$, $\ad_{f_1} \circ \ad_{t^a} = \id_{F_m}$.
        Therefore $\varphi^a = \ad_{t^a} = \inv{\ad_{f_1}} \in \inn(F_m)$.
        So $k \mid a$, which is impossible, as $1 \leq a \leq k - 1$.
        We have therefore reached a contradiction, having assumed that $Q$ had non-trivial finite normal subgroups.
        Hence $Q$ does not have any non-trivial finite normal subgroups, and in particular its center is trivial.
    \end{proof}

    Let us recall two notions that will be important.
    Firstly, a group $G$ is \emph{conjugacy separable} if every element $g\in G$ is \emph{conjugacy distinguished}, ie. for all $h \in G$ such that h and g are not conjugate in $G$, there is a normal subgroup $N \unlhd G$ of finite index, such that the images $\bar{h}$ and $\bar{g}$ in $G/N$ are still not conjugate.
    We will use Dyer's theorem that virtually free groups are conjugacy separable \cite{Dyer_79}.

    The second concerns pointwise inner automorphisms.
    If $G$ is a group, an automorphism $\psi$ is \emph{pointwise inner} if, for all $g \in G$, $\psi(g)$ is conjugate to $g$.
    (We also say that the outer automorphism $[\psi]$ is pointwise inner.)
    In many cases, such automorphisms are inner.
    This was first observed by Grossman in \cite{grossman}, but we will use the comprehensive result of Minasyan and Osin \cite[Corollary 1.2]{Min_Os} that in a (relatively) hyperbolic group without non-trivial finite normal subgroups, all pointwise inner automorphisms are inner.
    
    The relevance of these two notions is given by the following proposition, which we will use to prove Proposition \ref{p_virt_conjclasses}.
    
    \begin{prop}
    \label{p_conj_sep_mink}
        \cite[Proposition 3.2]{reduc_conj_pb}
        Let $G$ be a finitely presented group such that the following holds:
        \begin{enumerate}
            \item $G$ is conjugacy separable,
            \item $G$ has no non-trivial, pointwise inner, finite-order, outer automorphisms,
            \item we are given a finite list $\{\alpha_1, \dots, \alpha_k\} \subset \aut(G)$ containing a representative of the conjugacy class of every finite-order element of $\out(G)$.
        \end{enumerate}
        Then $G$ has congruences separating torsion effectively.
    \end{prop}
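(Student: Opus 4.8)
The plan is to establish the three assertions in turn. Since $V$ is finitely generated and virtually free it is the fundamental group of a finite graph of finite groups (Karrass--Pietrowski--Solitar; Stallings--Dunwoody), hence acts minimally and cocompactly on a simplicial tree with finite vertex and edge stabilizers; if $V$ is elementary (finite or two-ended) then $\out(V)$ is finite and the first two assertions are immediate, so assume $V$ is non-elementary. To obtain the first assertion I would invoke the equivariant outer space of $V$ of Krsti\'c--Vogtmann --- the virtually-free counterpart of Culler--Vogtmann space, of which the Realization Theorem \ref{t_nielsen} is the rank-only shadow: its spine $L_V$, the simplicial complex of reduced $V$-trees with finite stabilizers, is contractible, $\out(V)$ acts on it cocompactly (edge groups being finite, accessibility bounds the number of edge-orbits) and with finite cell stabilizers, and the fixed-point set of any finite subgroup of $\out(V)$ is non-empty --- in present-day terms, $L_V$ is a cocompact model for the classifying space $\underline{E}\out(V)$ for proper actions. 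The usual argument then applies: a finite $H \leq \out(V)$ fixes a cell $\sigma$, which lies in the $\out(V)$-orbit of one of finitely many orbit representatives $\sigma_1, \dots, \sigma_n$, so a conjugate of $H$ lies in the finite group $\mathrm{Stab}_{\out(V)}(\sigma_i)$; hence there are finitely many conjugacy classes of finite subgroups, and --- since a finite cyclic subgroup has only finitely many generators --- finitely many conjugacy classes of finite-order elements.

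For the second assertion I would first pass from $\out(V)$ to $\aut(V)$. Because $\mathcal{Z}(V) = 1$, Eilenberg--MacLane obstruction theory applied to $1 \to \inn(V) \to \aut(V) \to \out(V) \to 1$ (kernel $\inn(V) \cong V$) shows that this extension splits over every finite subgroup of $\out(V)$ and that any two such splittings are conjugate by an inner automorphism, the relevant obstruction and ambiguity lying in cohomology with coefficients in $\mathcal{Z}(V) = 0$. Hence each finite-order $\Phi \in \out(V)$ has a finite-order lift $\varphi \in \aut(V)$, unique up to $\inn(V)$-conjugacy, and $\Phi_1 \sim \Phi_2$ in $\out(V)$ if and only if their finite-order lifts are conjugate in $\aut(V)$; so it suffices to compute a finite $L \subset \aut(V)$ meeting every $\aut(V)$-conjugacy class of finite-order elements, after which $\{[\alpha] : \alpha \in L\}$ meets every $\out(V)$-conjugacy class of finite-order elements. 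To produce $L$ I would carry out the construction above effectively, taking as input a finite presentation of $V$ together with a finite-index free subgroup (a certificate of virtual freeness, available in the application, where $V$ is an explicit quotient of a free-by-cyclic group): compute a finite graph-of-finite-groups decomposition of $V$; using the computable accessibility bound (the number of edges being bounded in terms of the index and rank of the free subgroup) enumerate the finitely many reduced graph-of-finite-groups decompositions of $V$ (decidable, as the isomorphism problem for virtually free groups is solvable), which exhaust the vertex-orbits of $L_V$; for each, compute the finite group of its marked symmetries, realize these as outer automorphisms of $V = \pi_1(\mathbb{G})$ via the path construction recalled in Section \ref{prelim}, and compute their finite-order lifts in $\aut(V)$ by a search that terminates because such a lift exists. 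Then $L$ is the union of these finitely many finite sets, and the conjugating argument of the first assertion shows it works.

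For the third assertion, assume in addition that $V$ has no non-trivial finite normal subgroup; I would then simply verify the hypotheses of Proposition \ref{p_conj_sep_mink} with $G = V$. First, $V$ is finitely presented, being virtually free. Second, $V$ is conjugacy separable by Dyer's theorem \cite{Dyer_79}. Third, $V$ is hyperbolic (being virtually free) and has no non-trivial finite normal subgroup, so by Minasyan--Osin \cite{Min_Os} every pointwise inner automorphism of $V$ is inner; in particular $V$ has no non-trivial pointwise inner outer automorphism, a fortiori none of finite order. Fourth, one has the list $L$ from the second assertion. Proposition \ref{p_conj_sep_mink} then yields that $V$ has congruences separating torsion effectively, which completes the proof.

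The only real obstacle is the effectivity in the second assertion: the finiteness statement is soft once the equivariant outer space machinery is granted, and the congruences statement is a routine verification, but turning the structure of $L_V$ into an honest algorithm --- computing a graph-of-groups decomposition of $V$ from the given data, enumerating the cell-orbits \emph{with a termination certificate} (where the quantitative accessibility bound must be used), extracting each finite stabilizer as an explicit finite subgroup of $\aut(V)$, and computing its finite-order lift --- is where the care is needed.
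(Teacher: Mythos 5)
Your proposal does not prove the stated proposition. Proposition \ref{p_conj_sep_mink} concerns an \emph{arbitrary} finitely presented group $G$ that is conjugacy separable, has no non-trivial pointwise inner finite-order outer automorphisms, and comes equipped with a finite list of representatives of the conjugacy classes of finite-order elements of $\out(G)$; the conclusion to be established is that such a $G$ has congruences separating torsion effectively. What you have written is instead a proof of Proposition \ref{p_virt_conjclasses} (the statement about virtually free groups $V$ with trivial center), and in your third paragraph you explicitly invoke Proposition \ref{p_conj_sep_mink} as a known result in order to conclude. As a proof of Proposition \ref{p_conj_sep_mink} itself, this is circular: you assume exactly the statement you were asked to prove, and the entire outer-space/graph-of-groups machinery you develop addresses a different proposition.

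A proof of the actual statement must manufacture the characteristic finite-index subgroup $G_0$ directly from hypotheses (1)--(3). The standard route (this is Dahmani--Touikan's argument) is: for each $\alpha_i$ in the given list whose class in $\out(G)$ is non-inner of finite order, hypothesis (2) guarantees some $g_i \in G$ with $\alpha_i(g_i)$ not conjugate to $g_i$; hypothesis (1) then guarantees a finite-index normal subgroup $N_i$ in whose quotient the images of $g_i$ and $\alpha_i(g_i)$ remain non-conjugate, and the pair $(g_i, N_i)$ can be found by an exhaustive parallel search over elements and finite quotients that is certain to terminate. One then takes $G_0$ to be the intersection of all subgroups of $G$ of index at most $\max_i [G:N_i]$, a computable, characteristic, finite-index subgroup, and checks that any non-trivial finite-order element of $\out(G)$, being conjugate in $\out(G)$ to some $[\alpha_i]$, has non-trivial image in $\out(G/G_0)$: otherwise $[\alpha_i]$ would become inner on $G/G_0$, forcing $\alpha_i(g_i)$ to be conjugate to $g_i$ modulo $N_i \geq G_0$, a contradiction. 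None of these steps appears in your proposal. (For what it is worth, your argument for Proposition \ref{p_virt_conjclasses} is close in spirit to the paper's, which works with the Stallings--Dunwoody deformation space and the algorithmic results of Dahmani--Guirardel rather than Krsti\'c--Vogtmann equivariant outer space; but that is a different statement from the one at hand.)
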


    The previous proposition is stated as in \cite[Proposition 3.2]{reduc_conj_pb}, where it is proven that if a group $G$ verifies the hypotheses in the statement, then one may explicitly construct a finite index, characteristic subgroup of $G$ separating congruences.
    Therefore, the conclusion is in fact that the class of groups verifying these hypotheses has congruences effectively separating torsion.

    Recall that the Dunwoody-Stallings deformation space of a group $G$ is the set of decompositions of $G$ as a graph of groups with finite edge groups and finite or one-ended vertex groups.
    As in \cite{dahm_guir}, we recall that this space can be viewed as a graph whose vertices are equivalence classes of certain essential $G$-trees on which $G$ acts such that all edge stabilizers are finite.
    Furthermore, this graph inherits an action of $\out(G)$ from the action of $\out(G)$ by precomposition on the deformation space.
    If $G$ is hyperbolic, a finite fundamental domain for this action can be computed, as well as  the stabilizers of each of its points, by \cite[Theorem 7.1, Lemma 7.5]{dahm_guir}.
    Let us now prove Proposition \ref{p_virt_conjclasses}.
    
    \begin{proof}[Proof of Proposition \ref{p_virt_conjclasses}]
        We apply a variation on Culler's proof of Theorem \ref{t_nielsen} (\cite[Theorem 2.1]{culler_realization}).
        Let $H$ be a finite subgroup of $\out(V)$.
        This proof relies on constructing an extension $\Gamma$ of $V$ by $H$, in the sense that we wish to obtain the following short exact sequence:
        $$ 1 \rightarrow V \rightarrow \Gamma \rightarrow H \rightarrow 1
        $$
        corresponding to the inclusion homomorphism $H \hookrightarrow \out(V)$.
        For this, take $\Gamma$ to be the full preimage of $H$ in $\aut(V)$.
        Since $V$ has trivial center, it is isomorphic to $\inn(V)$, which injects into $\Gamma$, and $\Gamma/\inn(V) = H$.
        Now $V$ is of finite index in $\Gamma$, and $V$ is virtually free.
        Therefore $\Gamma$ is also virtually free.
        Recall that by Karrass, Pietrowski and Solitar's corollary of Stalling's theorem \cite[Theorem 1]{karrass_gog} there exists $\mathcal{X}$ a finite graph of finite groups, and $v_0$ a vertex, such that $\Gamma = \pi_1(\mathcal{X}, v_0)$.
        Now, since $V$ is a subgroup of $\Gamma$, and $\Gamma$ acts on $T$ the Bass-Serre tree of $\mathcal{X}$, then so does $V$.
        Now $V$ does not necessarily act freely on $T$ since $V$ is only virtually free.
        So $V \backslash T$ is a graph of groups, on which $V\backslash \Gamma \cong H$ acts and $\pi_1(V \backslash T) \cong V$.
    
        In other words, we have found a tree in the Stallings-Dunwoody space $\mathcal{D}$ of $V$ on which $H$ acts, that is the Bass-Serre tree of the graph of groups $V \backslash T$.
        Thus $H$ fixes a point in $\mathcal{D}$.
        Now, since $V$ is virtually free, it is hyperbolic.
        Therefore, we may apply the results of \cite{dahm_guir} recalled above and compute $(\tau_i)_{1 \leq i \leq N}$ a fundamental domain in $\mathcal{D}$ for the action of $\out(V)$.
        Therefore a conjugate of $H$ fixes some $\tau_i$.
        Hence, there exists $H'$, conjugate to the subgroup $H$, that is in the stabilizer in $\out(V)$ of $\tau_i$, denoted by $\out_{\tau_i}(V)$.
        
        Now, $\out_{\tau_i}(V)$ is finite.
        Indeed, consider $\Psi \in \out_{\tau_i}(V)$.
        Then $\Psi$ is an outer automorphism of $V$ that preserves $\tau_i$ and therefore acts $V$-equivariantly on the underlying tree $T_i$.
        Therefore $\Psi$ is an automorphism of the graph of groups $V \backslash \tau_i$, in the sense defined in \cite[Definition 2.19]{dahm_guir}.
        Now, by the corollary of Stalling's theorem recalled above, and by the fact that two trees are in the same deformation space if and only if they have the same elliptic subgroups, $V \backslash \tau_i$ is a finite graph of finite groups.
        Furthermore, $\Psi$ is determined by a tuple $(F, (\varphi_e), (\varphi_v), (\gamma_e))$ of
        \begin{itemize}
            \item[-] a graph isomorphism $F$ of the underlying graph of $V \backslash \tau_i$,
            \item[-] for each edge $e$ in $V \backslash \tau_i$, an isomorphism $\varphi_e$ between the edge groups of $e$ and $F(e)$,
            \item[-] for each vertex $v$ in $V \backslash \tau_i$, an isomorphism $\varphi_v$ between the vertex groups of $v$ and $F(v)$,
            \item[-] for each edge $e$ in $V \backslash \tau_i$, an element $\gamma_e$ in the vertex group of $F(t(e))$, with $t(e)$ the terminal end of edge $e$,
        \end{itemize}
        with certain extra conditions (see \cite[Definition 2.19]{dahm_guir}).
        For each of these items, there are only finitely many possibilities, since the graph is finite and every vertex and edge group as well.
        Therefore, there are only finitely many possibilities for $\Psi$.
        Hence $\out_{\tau_i}(V)$ is finite.
        
        We have therefore proven that, if $\Phi \in \out(V)$ is of finite order (here we take the finite subgroup $H$ we considered above to be $\gen{\Phi}$), $\Phi$ has a conjugate in one of the $(\out_{\tau_i}(V))_{i \in \intint{1}{N}}$.
        Since each of these is finite, there are only finitely many conjugacy classes of finite order elements of $\out(V)$.
    
        Furthermore, since there is an effective procedure to compute $(\tau_i)_{i \in \intint{1}{N}}$, we may explicitly construct all the graph of group automorphisms of each of the $(\tau_i)_{i \in \intint{1}{N}}$.
        We may then take a representative in $\aut(V)$ of each of these, which will provide us with a finite list in $\aut(V)$ containing a representative of the conjugacy class of every finite-order element of $\out(V)$.

        Let us now suppose that $V$ has no finite normal subgroups.
        Since $V$ has trivial center, by \cite[Corollary 1.2]{Min_Os} all pointwise inner automorphisms are inner.
        Furthermore, since $V$ is virtually free, by \cite{Dyer_79} it is conjugacy separable.
        Therefore we may apply Proposition \ref{p_conj_sep_mink} and conclude.

    \end{proof}    

    \subsection{Congruences of the entire suspension}
    
    We may now prove Theorem \ref{mink} that states that the class of all mapping tori of the form $F_m \rtimes_{\varphi} \gen{t}$, for $m \geq 1$, $\Phi \in \out(F_m)$ of finite order, and $\varphi \in \Phi$, has congruences separating torsion effectively.
    
    \begin{proof}[Proof of Theorem \ref{mink}]
        Recall, we have already proven the case where $m = 1$ in Remark \ref{r_case_m1}.
        Note that the case where $\Phi$ is trivial has been treated by \cite[Corollary 3.4]{reduc_conj_pb}.
        Let us now consider $m \geq 2$, $\Phi \in \out(F_m)$ of finite order $k > 1$, and $\varphi \in \Phi$.

        Recall that if $\varphi^k = \ad_{\inv{f_0}}$, then $x = t^k f_0$ is such that $\mathcal{Z}(\mathbb{T}_{\varphi}) = \gen{x}$.
        Note that, since the center of $F_m \hookrightarrow \mathbb{T}_{\varphi}$ is trivial, $x \notin F_m$.
        Let $Q = \mathbb{T}_{\varphi} / \gen{x}$.
        The following sequence is exact:
        $$ 1 \rightarrow \gen{x} \rightarrow F_m \rtimes_{\varphi} \gen{t} \rightarrow Q \rightarrow 1
        $$
    
        We will show that there exists a finite quotient of $F_m \rtimes_{\varphi}\gen{t}$ on which any non-trivial, finite order outer automorphism of $F_m \rtimes_{\varphi}\gen{t}$ induces an outer automorphism that is still non-trivial.
        Let $\Psi \in \out(F_m \rtimes_{\varphi}\gen{t})$ be non-trivial  of finite order.
        Note that $\gen{x} = \mathcal{Z}(F \rtimes_{\varphi}\gen{t})$ is characteristic.
        Hence we may consider $\Psi_Q$ the outer automorphism induced on the quotient.

        Three cases arise. 
        \paragraph{First case}
            Suppose that there exists $\psi \in \Psi$ such that the restriction $\psi\mid_{\gen{x}}$ is non-trivial.
            Therefore, $\psi(x) = \inv{x}$.
            Indeed, as $\mathcal{Z}(F_m \rtimes_{\varphi}\gen{t})$ is characteristic, the restriction $\psi\mid_{\gen{x}}$ is an automorphism of $\mathcal{Z}(F_m \rtimes_{\varphi}\gen{t}) \cong \Z$.
            Since there are only two such automorphisms, identity and inversion, the restriction must be the latter. 
            Now consider $H_0 = \gen{F_m, x^3} \unlhd F_m \rtimes_{\varphi}\gen{t}$.
            Then $H_0$ is normal and of finite index $3k$ in $\mathbb{T}_\varphi$.
            Indeed, since $x = t^k f_0$ and $t$ and $f_0$ commute, we get $F_m \rtimes_{\varphi}\gen{t} / H_0 \cong \Z/3k\Z$.
            Denote with a bar $\bar{.}$ the induced elements and homomorphisms in $F_m \rtimes_{\varphi}\gen{t} / H_0$.
            Since $\bar{x}$ is central, its conjugacy class is trivial, and since $\bar{x} \neq \bar{\psi}(\bar{x})$ in $F_m \rtimes_{\varphi}\gen{t} / H_0$, they cannot be conjugates.
            Thus $\psi$ is still non-inner on the quotient.
            Now, $H_0$ is not necessarily characteristic in $\mathbb{T}_{\varphi}$.
            Therefore, we consider instead $H_1 = \bigcap\limits_{\psi \in Aut(\mathbb{T}_{\varphi})}\psi(H_0)$.
            Now $H_1$ is characteristic and it is still of finite index, since there are only finitely many subgroups in this intersection.
            Indeed, subgroups of finite index are exactly the stabilizers of points in actions on a finite set.
            This also allows us to explicitly compute generators for the intersections of such subgroups, by the Reidemeister-Schreier method, for the action on a product of finite sets.
            Furthermore, since $H_1 \subset H_0$, $\psi$ still induces a non-inner automorphism on the quotient $\mathbb{T}_{\varphi}/H_1$.  
    
        \paragraph{Second Case}
            Suppose that $\Psi_Q$ is non-trivial.
            By Proposition \ref{l_Q_mink}, $Q$ belongs to a class of groups that has congruences separating torsion effectively, so we may explicitly construct a finite index characteristic subgroup $L < Q$ such that $\Psi_Q$ is still non-trivial on the quotient $Q/L$.
    
        \paragraph{Third and final case}
            Suppose that $\Psi_Q$ is trivial and that for all $\psi \in \Psi$, $\psi\mid_{\gen{x}}$ is trivial.
            Take $\psi \in \Psi$ such that $\psi_Q$ is trivial.
            In other words, $\psi$ is in the kernel of the map $\aut(\mathbb{T}_{\varphi}) \rightarrow \aut(\mathcal{Z}(\mathbb{T}_{\varphi})) \times \aut(\mathbb{T}_{\varphi}/\mathcal{Z}(\mathbb{T}_{\varphi}))$.
            Then for all $y \in F_m \rtimes_{\varphi}\gen{t}$, there exists $r \in \Z$ such that $\psi(y) = y x^r$.
            And since $\psi$ itself is non-trivial, there exists $y \in F_m \rtimes_{\varphi}\gen{t}$ such that the integer $r$ is non-zero.
            However, we can write $y$ as $t^l f$ with $f \in F_m$ and $l \in \Z$.
            Hence, since $x = t^k f_0$ and since $t$ and $f_0$ commute by Proposition \ref{pcenter},
            $$\psi(y) = t^l f x^r = t^l x^r f = t^l (t^k f_0)^r f = t^{l + rk} f_0^r f$$
            Furthermore, for all $n \in \N$, $\psi^n(y) = t^{l + nrk} f_0^{nr} f$, since $\psi\mid_{\gen{x}}$ is trivial.
            Now, for all $n \in \N^*$, $nrk$ is a non-zero integer.
            Hence $\psi^n(y)$ and $y$ cannot be conjugate in $F_m \rtimes_{\varphi}\gen{t}$, since they have different exponents in $t$.
            This however is impossible, since we took $\Psi$ to be of finite order.
            Therefore this case does not arise.
        
        \paragraph{In conclusion}
            Let $L_1$ be the preimage of $L$ in $\mathbb{T}_{\varphi}$ by the quotient homomorphism.
            Then $L_1$ is a finite index, characteristic subgroup of $F_m \rtimes_{\varphi}\gen{t}$ that can be explicitly constructed (we can lift a generating set of $L$ to $\mathbb{T}_{\varphi}$ using the word problem in $Q$, then add the center).
            Set $H = H_1 \cap L_1$, which is therefore also a finite index, characteristic subgroup of $F_m \rtimes_{\varphi}\gen{t}$ that can be explicitly constructed, as recalled for $H_1$ above.
            Then $\Psi$ is non-trivial on $F_m \rtimes_{\varphi}\gen{t} / H$, and since $H$ does not depend on the non-trivial finite order outer automorphism $\Psi$ that we considered, $F_m \rtimes_{\varphi}\gen{t}$ does indeed have congruences separating torsion.
            Since this construction is effective, the corresponding class of groups has congruences effectively separating torsion.
    \end{proof}

\subsection{Digression: Conjugacy separability of non-growing suspensions}
    
    As a side result that will not be used in what follows, we may note the following property on the conjugacy separability of non-growing suspensions. 

    \begin{prop}
        Let $m \geq 1$ and let $[\varphi] \in \out(F_m)$ be of finite order.
        Then $F_m \rtimes_{\varphi} \gen{t}$ is conjugacy separable.
    \end{prop}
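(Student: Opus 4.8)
The plan is to deduce conjugacy separability of $G := F_m \rtimes_{\varphi} \gen{t}$ from the fact that it is virtually a direct product of a free group with $\Z$, a structural observation already implicit in the proof of Theorem \ref{mink}. Concretely, let $k$ be the order of $[\varphi]$ and write $\varphi^k = \ad_{\inv{f_0}}$, so that by Proposition \ref{pcenter} the element $x := t^k f_0$ generates $\mathcal{Z}(G)$, commutes with $t$, and commutes with every $f \in F_m$ (indeed $\inv{x} f x = \inv{f_0}\,\varphi^k(f)\,f_0 = f$). Since $\gen{x} \cap F_m = \{1\}$ (powers of $x$ have nonzero $t$-exponent), the subgroup $G_0 := \gen{F_m, x}$ is the internal direct product $F_m \times \gen{x} \cong F_m \times \Z$; and since $t^k = x \inv{f_0} \in G_0$, every element $t^{\ell} f$ lies in $t^{r} G_0$ with $r \equiv \ell \pmod k$, so $[G : G_0] = k$. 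Hence $G$ is virtually $F_m \times \Z$.

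Next I would argue that $F_m \times \Z$ is conjugacy separable in a strong sense. It is the fundamental group of a compact special cube complex — take a finite loop-free graph with fundamental group $F_m$, crossed with a sufficiently subdivided circle — and every finite-index subgroup of $F_m \times \Z$ is again of the form $F_{m'} \times \Z$ (the central extension $1 \to \Z \to S \to F_{m'} \to 1$ splits because $H^2(F_{m'};\Z) = 0$), hence is itself conjugacy separable; so $F_m \times \Z$ is hereditarily conjugacy separable. One then applies Minasyan's theorem \cite{Minasyan} that virtually compact special groups are conjugacy separable (the mechanism being that $F_m \times \Z$, as a virtual retract of a right-angled Artin group, is hereditarily conjugacy separable) to conclude that $G$ is conjugacy separable.

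The only genuine obstacle is this last step: conjugacy separability is \emph{not} inherited by finite overgroups in general, so one really needs the hereditary strengthening (equivalently, Minasyan's virtually-special result) and not just bare conjugacy separability of $F_m \times \Z$; everything else is bookkeeping. A more self-contained alternative, closer in spirit to Section \ref{s_mink}, uses the central extension $1 \to \gen{x} \to G \to Q \to 1$ with $Q$ virtually free, hence hereditarily conjugacy separable by Dyer's theorem \cite{Dyer_79}: given non-conjugate $g, h \in G$, if their images in $Q$ are non-conjugate one separates them in a finite quotient of $Q$ and pulls back, and otherwise the problem reduces to separating $h$ from $h x^{\nu}$ for some $\nu$ with $x^{\nu} \notin \{[h,w] : w \in G\}$; via the decomposition $g^{G} = \bigcup_{a=0}^{k-1} (\inv{t^a}\, g\, t^a)^{G_0}$ this comes down to twisted conjugacy separability of $F_m \times \Z$ with respect to the finite-order automorphisms induced by conjugation by powers of $t$, which is the technical heart of that route.
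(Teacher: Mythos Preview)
Your Route 1 is correct: $G_0 = \langle F_m, x\rangle \cong F_m \times \Z$ has index $k$ in $G$, it is itself a right-angled Artin group, and Minasyan's theorem that virtually compact special groups are conjugacy separable then applies directly. This is, however, heavier machinery than what the paper uses. The paper's argument stays entirely within Dyer's theorem for virtually free groups and never passes to $G_0$. Instead it exploits the $t$-exponent grading. Given non-conjugate $a = t^p f$ and $b = t^q g$, the paper splits on whether $p = q$. If $p = q$, a one-line lifting argument shows that the images $\bar a, \bar b$ in $Q = G/\langle x\rangle$ are already non-conjugate (a conjugacy $\bar b = \bar y^{-1}\bar a\,\bar y$ lifts to $b = y^{-1} a y \cdot x^r$, and comparing $t$-exponents forces $r=0$), so Dyer applied to the virtually free group $Q$ finishes. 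If $p \neq q$, one quotients instead by $\langle x^r\rangle$ for $r$ large enough that $p \not\equiv q \pmod{rk}$; the images are then non-conjugate simply because their $t$-exponents differ, the quotient is still virtually free, and Dyer applies again. Your Route 2 is in the same spirit but misses this shortcut: your ``otherwise'' case (images in $Q$ conjugate) can only occur when $p \neq q$, and it is dispatched by the $\langle x^r\rangle$ trick without any twisted-conjugacy analysis in $F_m \times \Z$. So your main argument works but imports outside machinery, while the paper's is self-contained modulo \cite{Dyer_79}.
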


    \begin{proof}
        In the case where $F_m \rtimes_{\varphi} \gen{t} \cong \Z^2$, conjugacy classes are trivial, so for any two distinct elements of $F_m \rtimes_{\varphi} \gen{t}$, it suffices to take a large enough finite quotient so that they are sent to distinct elements.
        
        Suppose $F_m \rtimes_{\varphi} \gen{t}$ is not $\Z^2$. Recall that if $\varphi^k = \ad_{\inv{f_0}}$, then $x = t^k f_0$ is such that $\mathcal{Z}(F_m \rtimes_{\varphi} \gen{t}) = \gen{x}$.
        Take two elements of $F_m \rtimes_{\varphi} \gen{t}$ that are non conjugate, say $a = t^p f$ and $b = t^q g$.

        Suppose first that $p = q$.
        Let $Q = F_m \rtimes_{\varphi} \gen{t} / \mathcal{Z}(F_m \rtimes_{\varphi} \gen{t})$ and consider the images $\bar{a}$ and $\bar{b}$ in $Q$.
        If $\bar{a}$ and $\bar{b}$ are conjugate in $Q$, then there is $\bar{y} \in Q$ such that $\bar{b} = \inv{\bar{y}} \bar{a} \bar{y}$.
        Then by lifting the relation to $F_m \rtimes_{\varphi} \gen{t}$, there exists $r \in \Z$ such that $b = \inv{y} a y x^r$.
        Now, since $p = q$, then $r = 0$.
        So $a$ and $b$ are conjugate and we reach a contradiction.
        Therefore, $\bar{a}$ and $\bar{b}$ are not conjugate in $Q$.
        Now, since $Q$ is virtually free, by \cite{Dyer_79} it is conjugacy separable.
        So there is a finite index, normal subgroup $\bar{N} \unlhd Q$ such that $\bar{a}$ and $\bar{b}$ are still not conjugate on the quotient $Q / \bar{N}$.
        Take $N$ to be the full preimage of $\bar{N}$ by the quotient.
        It is a finite index normal subgroup such that $a$ and $b$ are still not conjugate on the quotient $F_m \rtimes_{\varphi} \gen{t}/N$.

        Now suppose that $p \neq q$.
        Take the quotient by $F_m$, thus sending $a$ and $b$ to two different elements of $\gen{t}$.
        Then take a sufficiently large finite quotient so that $a$ and $b$ are still sent to different elements.
        Then $a$ and $b$ are still not conjugate in this finite quotient as it is abelian.
    \end{proof}

\section{Mixed Whitehead problem}
\label{s_wh}

Take $\Phi \in \out(F_m)$ to be of finite order $k$ and let $\varphi \in \Phi$.
We wish to prove that the mixed Whitehead problem on $\out_{fo}(F_m \rtimes_{\varphi} \gen{t})$ is solvable in the sense recalled in Theorem \ref{t_mwh} below.
For this we will need a result of Dahmani and Guirardel on the extended isomorphism problem for hyperbolic groups with marked peripheral structure. As in \cite[Section 2.4]{dahm_guir}, a marked peripheral structure $\mathcal{P}$ on a group $G$ is a tuple $\mathcal{P} = ([S_1], \dots, [S_n])$ where each $S_i$ is a tuple $(g_1, \dots, g_r)$ of elements of $G$, and its conjugacy class $[S_i]$ is the set of tuples $(\inv{g} g_1 g, \dots, \inv{g} g_r g)$ for $g \in G$. 
Denote by $\aut(G, \mathcal{P})$ the automorphisms $\varphi$ of $G$ such that for all $i$, $[\varphi(S_i)] = [S_i]$, in the sense that there exists $g \in G$ such that $(\varphi(g_1), \dots, \varphi(g_r)) = (\inv{g} g_1 g, \dots, \inv{g} g_n g)$.

\begin{theorem}\cite[Theorem 8.1]{dahm_guir}
\label{t_dahm_guir_hyp}
    Let $G$ be a hyperbolic group, $N \in \N$ and let $S_1, \dots, S_N, S_1', \dots, S_N' \subset G$ be tuples.
    There is an effective procedure to determine if there exists $\Psi \in \out(G)$ such that, for $i \in \intint{1}{N}$, $\Psi([S_i]) = [S_i']$.
    
    Furthermore, let $\mathcal{P} = ([S_1], \dots, [S_N])$.
    Then $\aut(G, \mathcal{P})$ is finitely generated and there is an effective procedure that gives a finite generating set.
\end{theorem}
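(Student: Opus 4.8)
The plan is to prove this by reducing, via canonical and computable decompositions of $G$, to atomic pieces on which the question becomes tractable, and then reassembling. First I would dispose of the base cases: when $G$ is finite, or two-ended (so $\out(G)$ is finite and computable), everything reduces to a finite enumeration. In general I would first strip $G$ along its splittings over finite subgroups, using the Stallings--Dunwoody deformation space, which for hyperbolic groups is computable together with vertex stabilizers by \cite[Theorem 7.1, Lemma 7.5]{dahm_guir}. Since this deformation space is $\aut(G)$-invariant, every automorphism of $G$, after an inner adjustment, induces an automorphism of a fixed reduced tree in it; the data asked for in the theorem for $G$ can then be assembled from the analogous data for the finitely many one-ended or finite vertex groups, the finite group of symmetries of the underlying finite graph, and the finitely many Bass--Serre twists. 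This reduces us to the case where $G$ is one-ended.

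For $G$ one-ended hyperbolic, the engine is the canonical JSJ decomposition over virtually cyclic subgroups (Bowditch's JSJ), which is invariant under every automorphism of $G$ and computable from a presentation. Hence any $\Psi \in \out(G)$ induces an automorphism of the associated finite graph of groups in the sense of \cite[Definition 2.19]{dahm_guir}: a symmetry $F$ of the underlying graph, isomorphisms between matched vertex and edge groups, and twisting elements. I would then track the marking: for each entry $s$ of each tuple $S_i$ I record whether $s$ is elliptic (hence conjugate into a determined vertex group, up to the finite graph symmetry) or loxodromic, and in the loxodromic case the combinatorial pattern of its axis (which edges it crosses, in what cyclic order). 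An admissible $\Psi$ must preserve this pattern, so one enumerates the finitely many pairs (graph symmetry, type-assignment), and for each such combinatorial pattern the existence question becomes a finite system of mixed Whitehead problems in the vertex groups: by a vertex-group isomorphism one must match the incident edge groups and the conjugacy classes of elliptic entries landing there, subject to compatibility constraints across edges imposed by the loxodromic entries.

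It then remains to solve these vertex-group problems and to propagate finite generation. Rigid (non-surface, non-abelian) vertex groups are hyperbolic with finite, computable outer automorphism group -- this is where one invokes the isomorphism problem for rigid hyperbolic groups -- so the matching is a bounded search; abelian vertex groups are handled by effective linear algebra over $\Z$; quadratically hanging (surface-type) vertex groups reduce, via Dehn--Nielsen--Baer, to a change-of-marking problem in the mapping class group of a compact surface, subject to preserving a finite system of free homotopy classes of curves (the simple classes carried by incident edges, plus possibly non-simple classes coming from loxodromic entries of $\mathcal{P}$), which is decidable with computable stabilizers. Finally, $\aut(G,\mathcal{P})$ is generated by realizable lifts of the finite group of graph symmetries, the finitely generated computable stabilizers inside the vertex groups, and the twist subgroup (generated by finitely many edge twists, each finitely generated), the marking $\mathcal{P}$ only cutting each piece down to a computable subgroup; so it is finitely generated with a computable generating set. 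The hard part will be the surface vertices together with the loxodromic marked conjugacy classes: one must control precisely how the twist flow and the mapping classes move the axes of loxodromic elements, and prove effective finite generation of the stabilizer, in the mapping class group of a surface with boundary, of an arbitrary finite collection of not-necessarily-simple free homotopy classes; the rest is bookkeeping over the computable finite combinatorics of the canonical decomposition.
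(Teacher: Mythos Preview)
The paper does not prove this theorem. It is quoted verbatim from Dahmani--Guirardel \cite[Theorem~8.1]{dahm_guir} and used as a black box in the proof of Theorem~\ref{t_mwh}; there is no proof in the present paper to compare your proposal against.

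For what it is worth, your outline is broadly consonant with the strategy of \cite{dahm_guir}: pass to the Stallings--Dunwoody decomposition to reduce to one-ended factors, then use the canonical JSJ to split the automorphism group into graph symmetries, twists, and vertex-group automorphisms, handling rigid, virtually cyclic, and surface-type vertices separately. The genuinely delicate point you flag---the effective computation of the stabilizer, in a mapping class group, of a finite collection of not-necessarily-simple free homotopy classes---is indeed where the work lies, and your sketch does not resolve it; but since the paper under review simply imports the result, this is a comment on \cite{dahm_guir} rather than on anything you were expected to reproduce here.
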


\begin{rem}
    Since inner automorphisms preserve all conjugacy classes, the same statement can be made with $\out(G, \mathcal{P})$.
\end{rem}

\begin{rem}
\label{r_exist_construct}
    If there exists such a $\Psi$, sending one tuple $\mathcal{P}$ to another $\mathcal{P'}$, then there is also an effective procedure that produces one.
    Indeed, we wish to construct $\Psi \in \out(G)$ such that $\out(G, \mathcal{P}') = \Psi \out(G, \mathcal{P}) \inv{\Psi}$.
    Since $\out(G)$ is finitely generated and there is an effective procedure that computes a finite generating set (see \cite[Theorem 7.1]{dahm_guir}), and similarly for $\out(G, \mathcal{P})$ by Theorem \ref{t_dahm_guir_hyp}, then we may simply go through all the words of $\out(G)$ of increasing length, testing at each step if every generator of $\out(G,\mathcal{P})$ conjugated by the word preserves every $[S_i']$.
    This procedure terminates, since we assumed that such a $\Psi$ exists, and it returns a suitable outer automorphism.
\end{rem}

Let us now state and prove the main theorem of this section.

\begin{theorem}
\label{t_mwh}
    Let $m \geq 1$, let $\Phi \in \out(F_m)$ be of finite order and let $\varphi \in \Phi$.
    Given two tuples of conjugacy classes of tuples in $F_m \rtimes_{\varphi} \gen{t}$, there is an effective procedure to determine if there exists $\Psi \in \out_{fo}(F_m \rtimes_{\varphi} \gen{t})$ sending one to the other.
\end{theorem}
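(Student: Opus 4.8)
The plan is to reduce the fibre- and orientation-preserving mixed Whitehead problem for $G = F_m \rtimes_\varphi \gen{t}$ to the ordinary mixed Whitehead problem for $G$, which is solvable by Theorem \ref{t_dahm_guir_hyp} once we know $G$ is hyperbolic — but $G$ is generally \emph{not} hyperbolic (it contains $\Z^2$ whenever $\varphi$ has a fixed conjugacy class, e.g. via the center $\gen{t^k f_0}$). So the first order of business is to find the right hyperbolic (or relatively hyperbolic) group to which Dahmani--Guirardel applies. The natural candidate is the quotient $Q = G / \mathcal{Z}(G)$ from Section \ref{s_mink}, which by Proposition \ref{p_Qa} is virtually free, hence hyperbolic. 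The idea is to push all the conjugacy-class data down to $Q$, solve the problem there, and then lift.

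\medskip

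\noindent\textbf{Step 1: Reduce $\out_{fo}$ to a manageable description.} First I would pin down $\out_{fo}(G)$ concretely. By Proposition \ref{pcenter}, $\mathcal{Z}(G) = \gen{x}$ with $x = t^k f_0$ is characteristic and infinite cyclic, and $F_m$ is the unique (distinguished) normal subgroup giving the fibre, so the fibre is preserved by \emph{every} automorphism; thus ``fibre-preserving'' is automatic and $\out_{fo}(G)$ is the index-($\le 2$) subgroup of $\out(G)$ of automorphisms sending $t$ into $tF_m$ rather than $t^{-1}F_m$. Equivalently, $\Psi \in \out_{fo}(G)$ iff the induced automorphism of $G/F_m \cong \Z$ is the identity (not inversion). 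This is a single linear-algebraic condition, easy to check on any candidate.

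\medskip

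\noindent\textbf{Step 2: Transport the data to $Q$.} Given the two input tuples $\mathcal{P} = ([S_1],\dots,[S_N])$ and $\mathcal{P}' = ([S_1'],\dots,[S_N'])$ of conjugacy classes of tuples in $G$, let $\bar{\mathcal{P}}, \bar{\mathcal{P}}'$ be their images in $Q$. Any $\Psi \in \out(G)$ descends to $\bar\Psi \in \out(Q)$ (since $\gen x$ is characteristic), and if $\Psi$ realizes $\mathcal{P} \mapsto \mathcal{P}'$ then $\bar\Psi$ realizes $\bar{\mathcal P} \mapsto \bar{\mathcal P}'$. Conversely — and this is where the center being ``small'' helps — I would show that a realization downstairs can be \emph{lifted}: adjoin to $\bar{\mathcal P}, \bar{\mathcal P}'$ the extra conjugacy-class data needed to see the $\gen x$-torsor structure (concretely, the conjugacy classes of the original representatives $t^{p_i}f_i$, whose $t$-exponents are invariant, together with enough classes to detect the kernel of $\out(G) \to \out(Q)$, which by the Third Case analysis in the proof of Theorem \ref{mink} contains no finite-order elements and is in fact controlled by $H^1$-type data). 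So rather than a naïve transport, I would run Theorem \ref{t_dahm_guir_hyp} on $Q$ with an \emph{augmented} tuple chosen so that a solution upstairs exists iff one exists downstairs for the augmented tuple, and with the extra constraint ``induces identity on $G/F_m$'' imposed afterwards by the generating-set computation of Theorem \ref{t_dahm_guir_hyp} (compute generators of $\out(Q, \bar{\mathcal P}_{\mathrm{aug}})$, check the orientation condition on a candidate coset).

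\medskip

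\noindent\textbf{Step 3: Lift and verify.} Finally, given $\bar\Psi \in \out(Q)$ produced by Remark \ref{r_exist_construct}, I would lift it to $\Psi \in \aut(G)$ — possible because $G$ is the central extension $1 \to \gen x \to G \to Q \to 1$ and one checks directly that an automorphism of $Q$ respecting the extension class lifts, the obstruction living in a group that vanishes here by the structure of $\mathcal{Z}(G)$ (and $\Psi$ is unique up to the explicit ambiguity of how it moves $x$, i.e.\ up to the automorphism $x \mapsto x^{-1}$, which is handled as in the First Case of the proof of Theorem \ref{mink}). Then verify $\Psi$ is orientation-preserving, correcting by composing with inversion on $t$ if needed, and verify $\Psi(\mathcal P) = \mathcal P'$ directly in $G$ using the solvable conjugacy problem in $G$ (free-by-cyclic, by \cite{bmmv}).

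\medskip

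\noindent\textbf{Main obstacle.} The crux is Step 2: showing that solvability downstairs is \emph{equivalent} to solvability upstairs, i.e.\ that the finite amount of extra data in the augmented tuple genuinely captures the kernel of $\out(G) \to \out(Q)$. The proof of Theorem \ref{mink} already shows this kernel has no non-trivial finite-order elements (Third Case), which is exactly what makes the central-extension lifting obstruction manageable, but turning ``the kernel is torsion-free and explicitly understood'' into ``a bounded augmentation of the conjugacy-class tuple forces the lift'' requires care — in particular making sure the augmentation is \emph{computable from the input}. If a clean augmentation is elusive, the fallback is to bypass the lift entirely: enumerate $\out(G)$ directly (it is finitely generated — $G$ is free-by-cyclic with finite-order monodromy, and one can extract generators), use $\out(G,\mathcal P)$-type stabilizer computations pulled back from $Q$ to get a generating set for the relevant stabilizer in $\out(G)$, and run the Remark \ref{r_exist_construct} enumeration in $G$ itself, with the conjugacy checks done via \cite{bmmv}.
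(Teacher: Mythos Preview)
Your overall strategy---pass to $Q = G/\mathcal{Z}(G)$ and invoke Theorem~\ref{t_dahm_guir_hyp}---matches the paper, but there is a genuine error in Step~1 and a missing idea that makes your Steps~2--3 unnecessarily vague.

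\medskip
\noindent\textbf{Error in Step 1.} The fibre $F_m$ is \emph{not} characteristic in $G = F_m \rtimes_\varphi \gen{t}$ in general, so ``fibre-preserving'' is not automatic. Already for $\varphi = \id$ and $m \ge 2$ one has $G = F_m \times \Z$, and the automorphism fixing $t$ and sending a basis element $a \mapsto at$ does not preserve $F_m$. Hence $\out_{fo}(G)$ is not the index-$\le 2$ subgroup of $\out(G)$ cut out by orientation alone, and the problem cannot be reduced to ``solve in $\out(G)$, then check orientation''.

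\medskip
\noindent\textbf{The missing simplification.} Rather than augmenting the tuples and wrestling with the kernel of $\out(G) \to \out(Q)$, the paper observes that every $\psi \in \aut_{fo}(G)$ \emph{fixes the central generator} $x = t^k f_0$: indeed $\psi(x)$ is central and lies in $t^k F_m$, so $\psi(x) = x$. Consequently $\aut_{fo}(G)$ maps \emph{injectively} into $\aut(Q)$, and the image is exactly the set of automorphisms of $Q$ preserving the subgroup $F_m < Q$ and sending $\bar t$ into $\bar t F_m$ (the inverse is given by lifting on generators). After a preliminary check that corresponding entries of $S_i$ and $S_i'$ have equal $t$-exponents (which guarantees that conjugacy of tuples in $Q$ lifts to conjugacy in $G$), the whole problem becomes a \emph{constrained} Whitehead problem in $Q$, with no lifting obstruction and no need for your augmentation.

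\medskip
\noindent\textbf{How the constraints are imposed.} The paper first runs Theorem~\ref{t_dahm_guir_hyp} on $Q$ to find some $\bar\psi$ sending $\bar{\mathcal P}$ to $\bar{\mathcal P}'$ and to compute a generating set of $\aut(Q,\bar{\mathcal P}')$. It then lets $\aut(Q,\bar{\mathcal P}')$ act on the \emph{finite} set of index-$k$ subgroups of $Q$: the image in this finite permutation group is computable from the generators, and one can decide whether some $\alpha \in \aut(Q,\bar{\mathcal P}')$ carries $\bar\psi(F_m)$ back to $F_m$. If so, one passes to the stabiliser $\mathcal S$ of $F_m$ inside $\aut(Q,\bar{\mathcal P}')$ (finitely generated and computable by \cite[Lemma~2.1]{dahm_guir}) and repeats the same finite-orbit trick with the set $\{\bar t^l F_m : 0 \le l < k\}$ to force $\bar t \mapsto \bar t F_m$. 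The resulting $\beta \circ \alpha \circ \bar\psi$ then lifts uniquely to the desired element of $\aut_{fo}(G)$.

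\medskip
In short, your ``main obstacle'' disappears once you notice that $\aut_{fo}(G)$ already embeds in $\aut(Q)$, and that the two conditions characterising its image are decidable via actions on finite sets. The cohomological lifting discussion and the fallback enumeration in your proposal are not needed.
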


\begin{proof}
    First let us note that in the case of $m = 1$, if we are dealing with $\Z \times \Z$, then the problem is solved by elementary linear algebra on $\Z$, and if we are dealing with $\Z \rtimes \Z$ it is trivial since, as stated above in Remark \ref{r_case_m1}, $\out(\Z \rtimes \Z)$ is finite.

    Similarly, one may treat the case $m = 2$ explicitly since only a few subcases arise.
    However the proof is significantly more technical than for $m = 1$ and the following covers this case.

    Now let $m \geq 2$.
    Let $\mathcal{P} = ([S_1], \dots, [S_N])$ and $\mathcal{P'} = ([S'_1], \dots, [S'_N])$ be tuples of classes of tuples in $F_m \rtimes_{\varphi} \gen{t}$.
    We begin by checking that for all $i$, the tuples $S_i$ and $S_i'$ have the same length and the exponents in $t$ of each element in $S_i$ matches with the corresponding one in $S_i'$.
    
    As before, we will consider the problem in $Q = F_m \rtimes_{\varphi} \gen{t} / \mathcal{Z}(F_m \rtimes_{\varphi} \gen{t})$, then lift the result to $F_m \rtimes_{\varphi} \gen{t}$.
    Recall that if $k$ is the order of $\Phi$ and $\varphi^k = \ad_{\inv{f_0}}$, then $\mathcal{Z}(F_m \rtimes_{\varphi} \gen{t}) = \gen{t^kf_0}$.
    Recall also that $Q$ is virtually free, and therefore hyperbolic.
    Let $\bar{\mathcal{P}}$ and $\bar{\mathcal{P'}}$ denote the projected tuples in $Q$.
    
    Note that for $\psi \in \aut_{fo}(F_m \rtimes_{\varphi} \gen{t})$, $\psi(t^kf_0)$ is central, so $\psi(t^kf_0) \in \gen{t^kf_0}$.
    However $\psi(t^kf_0) \in t^kF_m$, so $\psi(t^kf_0) = t^kf_0$.
    So there is a natural bijection between $\aut_{fo}(F_m \rtimes_{\varphi} \gen{t})$ and automorphisms of $Q$ that preserve $F_m$, and send $\bar{t}$ into $\bar{t}F_m$.
    Therefore, the problem is equivalent to deciding whether there exists $\bar{\psi} \in \aut(Q)$, sending $\bar{\mathcal{P}}$ to $\bar{\mathcal{P'}}$, that preserves $F_m$, and sends $\bar{t}$ into $\bar{t}F_m$.

    Since $Q$ is hyperbolic, by Theorem \ref{t_dahm_guir_hyp}, there is an effective procedure to determine if there exists $\bar{\psi} \in \aut(Q)$ sending $\bar{\mathcal{P}}$ to $\bar{\mathcal{P'}}$.
    And by Remark \ref{r_exist_construct}, if such a $\bar{\psi}$ exists, it can be constructed effectively.
    This however does not entirely solve the problem.
    Indeed, $\bar{\psi}$ does not necessarily send $F_m$ to $F_m$ nor $\bar{t}$ into $\bar{t} F_m$.
    If it does not (which can be tested in finitely many steps, by looking at the images of the generators $x_1, \dots, x_m$ and $\bar{t}$), we modify $\bar{\psi}$.
    For this, we use the second part of Theorem \ref{t_dahm_guir_hyp}.
    There is an effective procedure that gives a finite generating set of $\aut(Q, \mathcal{\bar{P}'})$, the subgroup of $\aut(Q)$ that preserves the conjugacy classes in $\mathcal{\bar{P}'}$.
    Therefore we may compose $\bar{\psi}$ by these automorphisms without undoing our work.
    
    Now $\aut(Q)$ acts on the set of surjective maps $Q \rightarrow \Z/k\Z$ by permutation.
    This set is finite, and the kernels of these maps are the index $k$ subgroups of $Q$ that $F_m$ might be sent to by $\bar{\psi}$.
    Now the surjective map that we have has kernel $\bar{\psi}(F_m)$ and sends $\bar{t}$ to some $\bar{p} \in \Z/k\Z$.
    The surjective map that we want has kernel $F_m$ and sends $\bar{t}$ to $\bar{1}$.
    We may effectively compute the subgroup of permutations to which $\aut(Q, \bar{\mathcal{P}'})$ maps by looking at its generators, and decide whether there exists an automorphism $\alpha \in \aut(Q, \bar{\mathcal{P}'})$ sending the map that we have, to the map that we want.
    In which case $\alpha \circ \bar{\psi} \in \aut(Q)$ sends $\bar{\mathcal{P}}$ to $\bar{\mathcal{P}'}$, $F_m$ to $F_m$, and $\bar{t}$ into $\bar{t} F_m$.   
    This automorphism can then be lifted to an automorphism of $F_m \rtimes_{\varphi} \gen{t}$ that solves the problem.
\end{proof}


\section{Conjugacy problem}
\label{s_conj}

In this section we wish to prove Theorem \ref{t_conj_pb}, which states that the conjugacy problem in $\out(F_m)$, for outer automorphisms whose polynomial parts are of finite order, is solvable.
For this we will use the reduction introduced by Dahmani and Touikan recalled in Theorem \ref{t_reduc}.
We will also need the following lemma on the structure of finitely generated subgroups of mapping tori with finite order monodromy.

\begin{lem}
\label{l_sg}
    Let $M \geq 1$ and let $[\varphi] \in \out(F_M)$ be of finite order.
    Then all finitely generated subgroups of $F_M \rtimes_{\varphi} \gen{t}$ are either free of finite rank, or of the form $F_{M'} \rtimes_{\alpha} \gen{t_{\alpha}}$, for some $M'$ finite, and with $[\alpha] \in \out(F_{M'})$ of finite order.
\end{lem}
\begin{proof}
    Let $H$ be a finitely generated subgroup of $F_M \rtimes_{\varphi} \gen{t}$.
    Let $C$ denote the center of $F_M \rtimes_{\varphi} \gen{t}$, and recall that $C$ is infinite cyclic, unless $F_M \rtimes_{\varphi} \gen{t} \cong \Z^2$, by Proposition \ref{pcenter}.
    In the latter case, the subgroups are of the desired form.
    Let us therefore consider the former case.
    
    Let us first suppose that $ H \cap C = \{1\}$.
    Then $H \hookrightarrow (F_M \rtimes_{\varphi} \gen{t})/C$, which is virtually free, as we have seen previously (Proposition \ref{l_Q_mink}).
    Now $H$ is torsion free, as it is a subgroup of $F_M \rtimes_{\varphi} \gen{t}$, which is torsion free.
    Therefore $H$ is in fact free and finitely generated.

    Let us now suppose that $C_0 \defn H \cap C$ is non-trivial.
    Then $C_0$ is infinite cyclic.
    Let $x \in C_0$ be a generator.
    Consider the following short exact sequence.
    $$ 1 \rightarrow F_M \rightarrow F_M \rtimes_{\varphi} \gen{t} \rightarrow \Z \rightarrow 1
    $$
    And now consider its restriction to $H$.
    $$ 1 \rightarrow F_M \cap H \rightarrow H \rightarrow \Z \rightarrow 1
    $$
    Since the term on the right is free, there is a section from $\Z$ to $H$ and therefore $H$ is of the form $H = (F_M \cap H) \rtimes_{\alpha} \Z$.
    Denote $H_F \defn F_M \cap H$ and let $t_1$ be a generator of the cyclic part of $H$, so that $H = H_F \rtimes_{\alpha} \gen{t_1}$.
    We will now check that $H_F$ (which is free) is finitely generated.
    Consider the quotient $\bar{H}$ of $H$ by $C_0 = \gen{x}$.
    
    Now either $H_F$ intersects $C_0$ non-trivially or they intersect trivially.
    In the first case, the center of $H_F$ is non-trivial, so $H_F \cong \Z$.
    However, if $H \cong \Z \rtimes \Z$, its center would be $\gen{t_1^2}$ which does not intersect $H_F$.
    Hence $H$ is in fact $\Z^2$ and we find what we wanted.
    
    In the second case, $H_F$ is isomorphic to its image $\bar{H_F}$ in the quotient $H/C_0$, and $x \notin H_F$.
    So we can write $x = t_1^r h$, with $r \in \N^*$ and $h \in H_F$.
    Therefore, in the quotient, $\bar{t_1}^r \in \bar{H_F}$, so $\bar{H_F}$ is of finite index in $\bar{H}$, which is finitely generated.
    Hence $\bar{H_F}$ is finitely generated, and so is $H_F$.    
    Therefore, we have shown that $H \cong F_{M'} \rtimes_{\alpha} \gen{t_1}$ and $H$ has a non-trivial center since $C_0 \subset \mathcal{Z}(H)$.
    So $[\alpha]$ is of finite order in $\out(F_{M'})$, by Remark \ref{r_center}.
\end{proof}

\begin{rem}
    This result can be proven in various other ways.
    For instance, the fact that $H_F$ is finitely generated can be reached using \cite[Lemma 3.5]{mut}, which relies on \cite[Proposition 2.3]{fhmappingtori}, or by using Cashen-Levitt's results on the BNS invariant of a polynomial free-by-cyclic group in \cite{cashen_levitt}.
\end{rem}

We may now state and prove the following theorem.

\begin{theorem}
\label{t_conj_pb}
    Let $m \in \N\setminus\{0\}$ and let $F_m$ be the free group of rank $m$.
    Let $\mathcal{A}$ be the set of outer automorphisms $\Phi \in \out(F_m)$ such that for $\varphi \in \Phi$, and for $P$ a polynomial subgroup for $\varphi$, there exist $k \in \N^*$ and $\gamma_P \in F_m$ such that $\ad_{\gamma_P} \circ \varphi^k$ restricts to identity on $P$ (ie. the polynomially growing parts of $\Phi$ are of finite order).
    
    There is an effective procedure that decides whether any two outer automorphisms in $\mathcal{A}$ are conjugate in $\out(F_m)$.
\end{theorem}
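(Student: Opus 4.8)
The plan is to apply Dahmani and Touikan's reduction (Theorem~\ref{t_reduc}) with $\mathcal{OA} = \mathcal{A}$, so that the entire proof reduces to verifying its four hypotheses for the associated classes $\mathcal{P}$ and $\mathcal{P}'$. By Remark~\ref{r_smt}, the sub-mapping tori built from the maximal polynomial subgroups of the automorphisms in $\mathcal{A}$, i.e.\ the members of $\mathcal{P}$, are exactly the free-by-cyclic groups $F_M \rtimes_{\alpha} \gen{t}$ with $[\alpha] \in \out(F_M)$ of finite order. I would first pin down the shape of $\mathcal{P}'$: a finitely generated subgroup $H$ of such a group is either contained in the fibre, hence finitely generated free, or it is of the form $K \rtimes_{\gamma} \gen{s}$ with $K = H \cap F_M$ finitely generated free and $\gamma$ the restriction to $K$ of $\ad_{\inv{f}} \circ \alpha^n$ for some $n \geq 1$ and $f \in F_M$; a suitable power of $\gamma$ is then $\ad_u|_K$ for some $u \in N_{F_M}(K)$, and since a nontrivial finitely generated normal subgroup of a free group has finite index, $N_{F_M}(K)/K$ is finite, so $[\gamma]$ again has finite order in $\out(K)$. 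Hence every group in $\mathcal{P}'$ is either finitely generated free or a free-by-cyclic group with finite-order monodromy, and in every case it is virtually of the form $F_M \times \Z$.

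Two of the four hypotheses are already available. Condition~(3) is Theorem~\ref{mink} for the non-free members of $\mathcal{P}'$ together with Dahmani and Touikan's result \cite[Corollary~3.2]{reduc_conj_pb} for the finitely generated free ones, and condition~(4) is precisely Theorem~\ref{t_mwh}. For condition~(1), hereditary algorithmic tractability: $\mathcal{P}'$ is closed under finitely generated subgroups by the description above, free-by-cyclic groups are coherent, and because the groups are virtually $F_M \times \Z$ one can compute a presentation of any given finitely generated subgroup (intersect with the finite-index $F_M \times \Z$, use Stallings foldings and elementary arithmetic there, then Reidemeister--Schreier), so the class is effectively coherent. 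Its presentations are recursively enumerable, since one may enumerate the pairs $(M,\alpha)$ with $\alpha \in \aut(F_M)$ and discard those for which $[\alpha]$ has infinite order: finite subgroups of $\out(F_M)$ have order bounded by an explicit function of $M$, so $[\alpha]$ has finite order iff a fixed power $\alpha^{N}$ (with $N$ depending only on $M$) is inner, and whether a given automorphism of $F_M$ is inner is decidable. The conjugacy problem is solvable uniformly in $\mathcal{P}'$ by \cite{bmmv} in the free-by-cyclic case and classically in the free case, and the generation problem is solvable uniformly because the membership problem is: it is solvable in $F_M \times \Z$ --- project to the free factor, apply Stallings, and read the residual $\Z$-ambiguity off the relations of the projected subgroup --- and solvability of the membership problem passes to commensurable groups.

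It remains to establish condition~(2), the fibre- and orientation-preserving isomorphism problem in $\mathcal{P}$, which I expect to be the main obstacle. Given $F_{M_1} \rtimes_{\alpha_1} \gen{t_1}$ and $F_{M_2} \rtimes_{\alpha_2} \gen{t_2}$ in $\mathcal{P}$, a fibre- and orientation-preserving isomorphism forces $M_1 = M_2 =: M$ and restricts on the fibre to an automorphism conjugating $[\alpha_1]$ to $[\alpha_2]$ in $\out(F_M)$; conversely such a conjugacy is realized by one. So the problem is to decide whether two finite-order elements of $\out(F_M)$ are conjugate, and I would do this by passing to the quotients $Q_i = (F_M \rtimes_{\alpha_i} \gen{t_i})/\mathcal{Z}(F_M \rtimes_{\alpha_i} \gen{t_i})$, which are virtually free and hence hyperbolic (Proposition~\ref{p_Qa}): a fibre- and orientation-preserving isomorphism of the mapping tori descends to --- and, using $\gen{t^k f_0} \cap F_M = \{1\}$ exactly as in the proof of Theorem~\ref{t_mwh}, conversely lifts from --- an isomorphism $Q_1 \to Q_2$ carrying the image of the fibre to the image of the fibre and the positive-orientation coset to the positive-orientation coset. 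One then solves this marked isomorphism problem by first using Dahmani and Guirardel's solution of the isomorphism problem for hyperbolic groups \cite{dahm_guir} to produce an isomorphism $Q_1 \to Q_2$ (or to rule one out), and then adjusting it to respect the fibre and the orientation coset by composing with elements of $\out(Q_2)$, read off from a computable generating set and from the finite actions of $\out(Q_1)$ and $\out(Q_2)$ on their index-$k$ subgroups and on the relevant finite set of cosets --- the same modification argument as in the proof of Theorem~\ref{t_mwh}. The point requiring care here is to keep this isomorphism problem genuinely decidable rather than merely semi-decidable, which is why one invokes the full strength of \cite{dahm_guir} rather than an unbounded search; the substantive structural input, however, is already contained in Sections~\ref{s_mink} and~\ref{s_wh}. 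With all four hypotheses verified, Theorem~\ref{t_reduc} applies to $\mathcal{A}$, and its conclusion is exactly the algorithm claimed in Theorem~\ref{t_conj_pb}, which is Theorem~\ref{t_A}.
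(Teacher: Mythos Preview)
Your proposal is correct and follows the same global architecture as the paper --- verify the four hypotheses of Theorem~\ref{t_reduc} for $\mathcal{OA}=\mathcal{A}$ --- but differs from the paper's proof in two places worth noting.

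For the description of $\mathcal{P}'$, the paper splits on whether $H$ meets the center $C=\gen{t^kf_0}$: if $H\cap C=\{1\}$ then $H$ embeds in the virtually free quotient $Q$ and, being torsion-free, is free; if $H\cap C\neq\{1\}$ the paper shows directly that $H\cap F_M$ is finitely generated (via the finite-index subgroup $H\cap(F_M\times C_0)$) and then invokes Remark~\ref{r_center} to deduce the monodromy has finite order. Your split on whether $H$ lies in the fibre, followed by coherence of free-by-cyclic groups for the finite generation of $K=H\cap F_M$ and the normalizer argument ($K$ nontrivial and finitely generated implies $[N_{F_M}(K):K]<\infty$) for finite order of $[\gamma]$, is a valid alternative. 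One small slip: the sentence ``in every case it is virtually of the form $F_M\times\Z$'' is false for free groups of rank $\geq 2$; this does not damage the argument, since your effective-coherence procedure is only needed in the free-by-cyclic case (where it is correct), the free case being handled by Stallings foldings alone.

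The more interesting divergence is condition~(2). The paper dispatches it in two lines: a fibre- and orientation-preserving isomorphism exists iff $[\alpha_1]$ and $[\alpha_2]$ are conjugate in $\out(F_M)$, and conjugacy of finite-order outer automorphisms is decided via Culler's realization theorem (Theorem~\ref{t_nielsen} and Remark~\ref{r_nielsen}). You instead pass to the virtually free quotients $Q_i$, invoke the Dahmani--Guirardel solution of the isomorphism problem for hyperbolic groups to produce (or rule out) an isomorphism $Q_1\to Q_2$, and then run the same finite-orbit adjustment as in Theorem~\ref{t_mwh} to force it to respect fibre and orientation; the lifting back to the mapping tori is justified exactly as in that proof. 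This works, is internally consistent with the machinery already set up in Sections~\ref{s_mink} and~\ref{s_wh}, and has the virtue of not introducing Nielsen realization as an extra ingredient. The paper's route is shorter because conjugacy of finite-order elements in $\out(F_M)$ is a classical solved problem; your route is longer but more self-contained relative to the tools the paper has already developed.
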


\begin{proof}
    We will prove that the class $\mathcal{A}$ verifies the hypotheses of Theorem \ref{t_reduc}.
    
    Let $\mathcal{P}$ be the collection of sub-mapping tori constructed from the maximal polynomial subgroups under each $\Phi \in \mathcal{A}$.
    Then by Remark \ref{r_smt}, all groups in $\mathcal{P}$ are of the form $F_M \rtimes_{\alpha} \gen{t}$, for some $M$ and some $[\alpha] \in \out(F_M)$ of finite order.
    
    First, let us note that, by Theorem \ref{t_mwh}, the fibre- and orientation-preserving mixed Whitehead problem is solvable in every group of $\mathcal{P}$.

    Let us check that the fibre- and orientation-preserving isomorphism problem in $\mathcal{P}$ is solvable.
    Let $\mathbb{T}_1, \mathbb{T}_2 \in \mathcal{P}$.
    If there exists a fibre- and orientation-preserving isomorphism between $\mathbb{T}_1$ and $\mathbb{T}_2$, then the respective fibres are isomorphic, and therefore have the same rank, say $M$.
    Hence the underlying automorphisms, $\alpha_1$ and $\alpha_2$, both belong to $\aut(F_M)$.
    Therefore, a fibre- and orientation-preserving isomorphism exists if and only if $[\alpha_1]$ and $[\alpha_2]$ are conjugate in $\out(F_M)$.
    Now, $[\alpha_1]$ and $[\alpha_2]$ are of finite order in $\out(F_M)$, since they arise from elements of $\mathcal{A}$.
    Therefore, their conjugacy can be tested using \cite[Theorem 3]{Khram95}.

    Let $\mathcal{P}'$ be the class of all groups in $\mathcal{P}$ and their finitely generated subgroups. By Lemma \ref{l_sg} all groups in $\mathcal{P}'$ are either free of finite rank, or of the form $F_{M'} \rtimes_{\alpha} \gen{t_{\alpha}}$, for some $M'$ finite, and with $[\alpha] \in \out(F_{M'})$ of finite order.
    Therefore $\mathcal{P}'$ has congruences that effectively separate torsion, for the former groups by using \cite[Proposition 1]{BaumslagTaylor} and a classical theorem of Minkowski as recalled above in Section \ref{ss_mt}, and by Theorem \ref{mink} for the latter.
    It is also possible to determine if a given group in $\mathcal{P}'$ coincides with a group containing it in $\mathcal{P}$ simply by testing whether or not it is free.
    If it is not free, then it coincides with a group in $\mathcal{P}$.
    
    Furthermore, all small subgroups of groups in $\mathcal{P}'$ are finitely generated.
    Indeed, let $H \in \mathcal{P}'$ and let $L < H$ be a small subgroup.
    Either $H$ is free, in which case so is $L$ and therefore $L$ is either trivial of infinite cyclic, or $H$ is of the form $F_{M'} \rtimes_{\alpha} \gen{t}$ with $[\alpha]$ of finite order.
    In this case, as in the proof of Lemma \ref{l_sg}, $L$ can be written $F_{M'} \cap L \rtimes \Z$.
    Now $F_{M'} \cap L$ is a free subgroup of $L$, so it must be abelian.
    Therefore, $L$ is either trivial, infinite cyclic, $\Z^2$, or $\Z \rtimes \Z$, so it is indeed finitely generated.

    Finally, we must prove that $\mathcal{P}'$ is hereditarily algorithmically tractable.
    It is indeed closed under taking finitely generated subgroups, and is effectively coherent, since the finitely generated subgroups are finitely presentable given their description above, and a presentation is computable.
    Indeed, let us show this last point.
    To do this, we take $h_1, \dots, h_n \in F_M \rtimes_{\varphi} \gen{t}$ with $[\varphi]$ of finite order and we must construct a presentation of $H = \gen{h_1, \dots, h_n}$.
    If $F_M \rtimes_{\varphi} \gen{t}$ is isomorphic to $\Z^2$, this is a problem of linear algebra.
    Otherwise, let $c$ be a generator of $\mathcal{Z}(F_M \rtimes_{\varphi} \gen{t})$, and let $\bar{H}$ be the image of $H$ under the quotient $F_M \rtimes_{\varphi}\gen{t}/ \gen{c}$.
    Consider the following short exact sequence:
    $$ 1 \rightarrow H \cap \gen{c} \rightarrow H \rightarrow \bar{H} \rightarrow 1
    $$
    Now $\bar{H}$ is a subgroup of $F_M \rtimes_{\varphi} \gen{t}/\gen{c}$, which is virtually free.
    Therefore, by known properties of virtually free groups, in particular Stallings' structure theorem, a presentation of $\bar{H}$ is computable by considering decompositions as finite graphs of finite groups (see for instance \cite[Theorem 5.8]{kmw}):
    $$\bar{H} = \gen{\bar{h}_1, \dots, \bar{h}_n \mid r_1((\bar{h}_i)_i), \dots, r_{n'}((\bar{h}_i)_i)}
    $$
    Now consider the words $r_j((h_i)_i) \in H$.
    Each of these is a certain computable power of $c$, which we will denote by $\alpha(r_j)$, ie $r_j((h_i)_i) = c^{\alpha(r_j)} \in H$.
    We wish to find a generator of $H \cap \gen{c}$, and we claim that by taking the gcd $\alpha$ of the $\alpha(r_j)$ then $c^{\alpha}$ is suitable.
    Indeed, any power of $c^{\alpha}$ is in $H \cap \gen{c}$.
    Now take $c^l \in H \cap \gen{c}$.
    Then $c^l$ can be written as a word $w((h_i)_i)$.
    However, as an element in $\bar{H}$, $w((\bar{h}_i)_i) = 1$, therefore it can also be written as a word in the conjugates of the relaters $r_j$, $w((\bar{h}_i)_i) = \prod_N \inv{g_N((h_i)_i)} r_N((\bar{h}_i)_i)^{\epsilon_N} g_N((h_i)_i)$, with $\epsilon_N \in \{1, -1\}$ and with possible repetitions of some relaters.
    Therefore, as an element in $H$, we get
    \begin{align*}
        c^l &= w((h_i)_i) \\
            &= \prod_N \inv{g_N((h_i)_i)} r_N((h_i)_i)^{\epsilon_N} g_N((h_i)_i) \\
            &= \prod_N \inv{g_N((h_i)_i)} c^{\epsilon_N \alpha(r_N)} g_N((h_i)_i) \\
            &= \prod_N c^{\alpha(r_N)} = c^{\sum_N \epsilon_N \alpha(r_N)}
    \end{align*}
    And hence $c^l$ is a power of $c^{\alpha}$ and $H \cap \gen{c} = \gen{c^{\alpha}}$.
    Note that we can also write $c^{\alpha}$ as a word in $(h_i)_i$, $R((h_i)_i) = c^{\alpha}$, as it is simply a word in $(r_j((h_i)_i))_j$. 
    Therefore the following is a presentation of $H$:
    $$ H = \gen{h_1, \dots, h_n \mid \forall j, r_j((h_i)_i) \left(R((h_i)_i)^{\alpha(r_j)}\right)^{-1}, \forall i_0, \inv{h_{i_0}} \inv{R((h_i)_i)} h_{i_0} R((h_i)_i)}
    $$
    Hence, $\mathcal{P}'$ is effectively coherent.

    Furthermore, the presentations of groups are recursively enumerable using Tietze transformations (see, for instance, \cite[Section 1.5]{magnus}) and the conjugacy problem is solvable for groups in $\mathcal{P}'$ by \cite[Theorem 1.1]{bmmv}.
    Finally the generation problem is solvable since, if a group $H$ and a subgroup $H' = \gen{g_1, \dots, g_m}$ of $H$ are given, then if $H$ is not $\Z^2$, as above taking the quotient by the center of $H$ gives us virtually free groups, where the problem is solved by \cite[Theorem 5.13]{kmw}, then one needs only compute the intersection of $H'$ with the center of $H$.
    The class $\mathcal{P}'$ is therefore hereditarily algorithmically tractable.

    We may therefore apply Theorem \ref{t_reduc} to $\mathcal{A}$ and conclude.
\end{proof}

Note that the arguments in this paper rely on the fact that the mapping tori we consider have non-trivial center, and that the quotient by the center is virtually free.
If the base group is hyperbolic instead of free, it is tempting to try to adapt the argument, however it requires some care, for instance in the steps using Dyer's theorem.

\bibliographystyle{alpha}
\bibliography{Biblio.bib}

\begin{thebibliography}{BMMV06}

\bibitem[BMMV06]{bmmv}
O.~Bogopolski, A.~Martino, O.~Maslakova, and E.~Ventura.
\newblock The conjugacy problem is solvable in free-by-cyclic groups.
\newblock {\em Bull. Lond. Math. Soc.}, 38(5):787--794, 2006.

\bibitem[BP24]{bridson}
M.~Bridson and P.~Piwek.
\newblock Profinite rigidity for free-by-cyclic groups with centre.
\newblock {\em Preprint}, 2024.

\bibitem[BT68]{BaumslagTaylor}
G.~Baumslag and T.~Taylor.
\newblock The centre of groups with one defining relator.
\newblock {\em Math. Ann.}, 175:315--319, 1968.

\bibitem[CL99]{Dehntwists}
M.~Cohen and M.~Lustig.
\newblock The conjugacy problem for {Dehn} twist automorphisms of free groups.
\newblock {\em Comment. Math. Helv.}, 74(2):179--200, 1999.

\bibitem[CL16]{cashen_levitt}
C.~Cashen and G.~Levitt.
\newblock Mapping tori of free group automorphisms, and the {Bieri}-{Neumann}-{Strebel} invariant of graphs of groups.
\newblock {\em J. Group Theory}, 19(2):191--216, 2016.

\bibitem[Cul84]{culler_realization}
M.~Culler.
\newblock Finite groups of outer automorphisms of a free group.
\newblock {\em Contemporary mathematics}, 33:197--207, 1984.

\bibitem[Dah16]{dahm}
F.~Dahmani.
\newblock On suspensions, and conjugacy of hyperbolic automorphisms.
\newblock {\em Trans. Am. Math. Soc.}, 368(8):5565--5577, 2016.

\bibitem[DFMT25]{dfmt}
F.~Dahmani, S.~Francaviglia, A.~Martino, and N.~Touikan.
\newblock The conjugacy problem for $\out(\text{F}_3)$.
\newblock {\em Forum Math., Sigma}, 13(e41):1--21, 2025.

\bibitem[DG11]{dahm_guir}
F.~Dahmani and V.~Guirardel.
\newblock The isomorphism problem for all hyperbolic groups.
\newblock {\em Geom. Func. An.}, 21:223--300, 2011.

\bibitem[DT24]{unip_lin_susp}
F.~Dahmani and N.~Touikan.
\newblock Unipotent linear suspensions of free groups.
\newblock {\em Preprint}, 2024.

\bibitem[DT25]{reduc_conj_pb}
F.~Dahmani and N.~Touikan.
\newblock Reducing the conjugacy problem for relatively hyperbolic automorphisms to peripheral components.
\newblock {\em Int. Math. Res. Not.}, 2025(20):1--19, 2025.
\newblock Id/No rnaf311.

\bibitem[Dye79]{Dyer_79}
J.~Dyer.
\newblock Separating conjugates in free-by-finite groups.
\newblock {\em Journal of the London Mathematical Society}, s. II(20):215--221, 1979.

\bibitem[FH99]{fhmappingtori}
M.~Feighn and M.~Handel.
\newblock Mapping tori of free group automorphisms are coherent.
\newblock {\em Ann. Math. (2)}, 149(3):1061--1077, 1999.

\bibitem[FH25]{fh}
M.~Feighn and M.~Handel.
\newblock The conjugacy problem for {UPG} elements of {{\(\operatorname{Out}(F_n)\)}}.
\newblock {\em Geom. Topol.}, 29(4):1693--1817, 2025.

\bibitem[Gro74]{grossman}
E.~Grossman.
\newblock On the residual finiteness of certain mapping class groups.
\newblock {\em J. Lond. Math. Soc, II. Ser.}, 9:160--164, 1974.

\bibitem[Khr85]{Khram85}
D.~G. Khramtsov.
\newblock Finite groups of automorphisms of free groups.
\newblock {\em Math. Notes}, 38:721--724, 1985.

\bibitem[Khr95]{Khram95}
D.~Khramtsov.
\newblock Decidability of the conjugacy problem for finite subgroups of automorphism groups of free groups.
\newblock {\em Algebra Logika}, 34(5):558--606, 1995.

\bibitem[KLV01]{klv}
S.~Krsti{\'c}, M.~Lustig, and K.~Vogtmann.
\newblock An equivariant {Whitehead} algorithm and conjugacy for roots of {Dehn} twist automorphisms.
\newblock {\em Proc. Edinb. Math. Soc., II. Ser.}, 44(1):117--141, 2001.

\bibitem[KMW05]{kmw}
I.~Kapovich, A.~Myasnikov, and R.~Weidmann.
\newblock Foldings, graphs of groups and the membership problem.
\newblock {\em Int. J. Algebra Comput.}, 15(1):95--128, 2005.

\bibitem[KPS73]{karrass_gog}
A.~Karrass, A.~Pietrowski, and D.~Solitar.
\newblock Finite and infinite cyclic extensions of free groups.
\newblock {\em Journal of the Australian Mathematical Society}, 16:458--466, 1973.

\bibitem[Lev09]{levitt}
G.~Levitt.
\newblock Couting growth types of automorphisms of free groups.
\newblock {\em Geometric and Functonal Analysis}, 19(4):1119--1146, 2009.

\bibitem[Lev15]{LevGBS}
G.~Levitt.
\newblock Generalized baumslag-solitar groups: rank and finite index subgroups.
\newblock {\em Ann. Inst. Fourier}, 65(2):725--762, 2015.

\bibitem[Los96]{los}
J.~Los.
\newblock On the conjugacy problem for automorphisms of free groups. {With} an addendum.
\newblock {\em Topology}, 35(3):779--808, 1996.

\bibitem[MKS04]{magnus}
W.~Magnus, A.~Karrass, and D~Solitar.
\newblock {\em Combinatorial group theory}.
\newblock Dover Publications Inc. Minolia, NY, 2004.

\bibitem[MO10]{Min_Os}
A.~Minasyan and D.~Osin.
\newblock Normal automorphisms of relatively hyperbolic groups.
\newblock {\em Trans. Am. Math. Soc.}, 362(11):6079--6103, 2010.

\bibitem[Mut24]{mut}
J.~P. Mutanguha.
\newblock On polynomial free-by-cyclic groups.
\newblock Preprint, {arXiv}:2412.16150 [math.{GR}] (2024), 2024.

\bibitem[Sel95]{sela}
Z.~Sela.
\newblock The isomorphism problem for hyperbolic groups \text{I}.
\newblock {\em Annales of Mathematics}, 141(2):217--283, 1995.

\bibitem[Zim81]{Zimmermann_1981}
B.~Zimmermann.
\newblock Uber homöomorphismen n-dimensionaler henkelkörper und endliche erweiterungen von schottky-gruppen.
\newblock {\em Commentarii Mathematici Helvetici}, 58:474--486, 1981.

\end{thebibliography}

\textsc{Gabriel Bartlett, Université Grenoble-Alpes, Institut Fourier, 38610 Gières, France}

email. \textbf{gabriel.bartlett@univ-grenoble-alpes.fr}

\end{document}